\numberwithin{equation}{section}
\date{\today}
\newcommand{\q}{\qquad}
\newcommand\x{\mathbf{x}}
\newcommand{\R}{\mathbb{R}}
\newcommand{\grad}{\nabla}
\newcommand{\cf}{\mathcal {F}}
\newcommand{\beq}{\begin{equation}}
\newcommand{\eeq}{\end{equation}}
\newcommand{\bdm}{\begin{displaymath}}
\newcommand{\edm}{\end{displaymath}}
\newcommand{\ba}{\begin{align}}
\newcommand{\ea}{\end{align}}
\newcommand{\id}{\mathbf{1}}                
\newtheorem{Lemma}{Lemma}
\newtheorem{Theorem}{Theorem}
\newtheorem{Corollary}{Corollary}
\newtheorem{Remark}{Remark}
\begin{document}

\title[Improved Hardy-Sobolev inequalities]{Improved Hardy-Sobolev
inequalities}
\author[A.~Balinsky]{A. Balinsky}
\address{School of Mathematics\\
Cardiff University\\
         23 Senghennydd Road\\
         Cardiff CF2 4YH\\
         UK}
\email{BalinskyA@cardiff.ac.uk}
\author[W.~D.~Evans]{W.~D. Evans}
\address{School of Mathematics\\
Cardiff University\\
         23 Senghennydd Road\\
         Cardiff CF2 4YH\\
         UK}
\email{EvansWD@cardiff.ac.uk}

\author[D.~Hundertmark]{D.~Hundertmark}
\address{School of Mathematics\\ University of Birmingham\\
Edgbaston, Birmingham B15 2TT\\ UK\\ On leave from Department of Mathematics\\
Altgeld Hall\\
         University of Illinois at Urbana-Champaign\\
         1409 W. Green Street\\
         Urbana, Illinois 61801\\
         USA}
\email{dirk@math.uiuc.edu}
\author[R.~T.~Lewis]{R.~T. Lewis}
\address{Department of Mathematics\\
         University of Alabama at Birmingham\\
         Birmingham, AL 35294-1170\\
         USA}
\email{lewis@math.uab.edu}


\thanks{The second author (WDE) gratefully acknowledges the hospitality and support of the
Isaac Newton Institute, University of Cambridge, during June,
2007, when some of this work was done. }
\thanks{The third author (DH) thanks the US National Science Foundation for financial support from grant DMS-0400940.}

\begin{abstract} The main result includes features of a Hardy-type
inequality and an inequality of either Sobolev or
Gagliardo-Nirenberg type. It is inspired by the method of proof of
a recent improved Sobolev inequality derived by M. Ledoux which
brings out the connection between Sobolev embeddings and heat
kernel bounds. Here Ledoux's technique is applied to the operator
$L:= {\bf{x}} \cdot \nabla $ and the analysis requires the
determination of the operator semigroup $\{e^{-tL^{*}L}\}_{t>0}$
and its properties.
\end{abstract}

\maketitle

\section{Introduction}
The best possible constant in Hardy's inequality
\begin{equation}\label{1.1}
    \int_{\R^n}|\nabla f|^p d\x \ge C(n,p)
    \int_{\R^n} \frac{|f(\x)|^p}{|\x|^p}d\x
\end{equation}
is $ C(n,p) = \{(n-p)/p\}^p$ and so the inequality only yields
non-trivial information when $n\neq p.$ In Theorem \ref{BE} below,
we prove that the related inequality
\begin{equation}\label{1.2}
    \int_{\R^n}|(\x\cdot\nabla) f(\x)|^p d\x \ge (n/p)^p
    \int_{\R^n}|f(\x)|^pd\x
\end{equation}
is satisfied for all values of $n$, including $n=p$, and this
implies Hardy's inequality for $1\le p \le n.$ The case $n=p$ has
a special significance also for the Sobolev inequality
\begin{equation}\label{1.3}
    \|f\|_{L^q(\R^n)} \le C'(n,p)\|\nabla f\|_{L^p(\R^n)},\ \ \  q = p^{*}=np/(n-p),\
    1\le p<n;
\end{equation}
when $n=p,$ (\ref{1.3}) does not hold for $q=\infty$. In
\cite{CDDV}, \cite{CDPX} and \cite{L}, the following improvement
of the Sobolev inequality is derived: for $1\le p<q<\infty,$
\begin{equation}\label{1.4}
\|f\|_{L^q(\R^n)} \le C'(n,p)\|\nabla f\|_{L^p(\R^n)}^{p/q}
\|f\|^{1-p/q}_{B_{\infty,\infty}^{p/(p-q)}}:
\end{equation}
the space $B_{\infty,\infty}^{p/(p-q)}$ is a Besov space defined
in terms of the heat semigroup $e^{t\Delta}$ (c.f.\cite{T},
Section 2.5.2). This includes, in particular, the Sobolev and
Gagliardo-Nirenberg inequalities, and also has important features
not possessed by (\ref{1.3}); see \cite{CDDV}, \cite{CDPX} and
\cite{L} for details.

This paper has two objectives: first to determine the semigroup
$e^{-tL^{*}L},$ where $L=\x\cdot\nabla$ in $L^2(\R^n),$ and then
to use this to derive an improved version of (\ref{1.2}) which is
analogous to (\ref{1.4}). A corollary of our main theorem in the
case $p=2$ is the inequality:
\begin{eqnarray}\label{1.5}
    \| r F(r)\|^2_{L^{2^{*}}(\R^+; d\mu))}&\le&
    C\left\{\|Lf\|^2_{L^2(\R^n)}-\frac{n^2}{4}\|f\|^2_{L^2(\R^n)}\right\}^{1/n}\nonumber
    \\
& \times &    \|f\|_{L^2(\R^n)}^{2(1-1/n)},
\end{eqnarray}
where $ 2^{*} = 2n/(n-2), d\mu(r) = r^{n-1}dr, C$ is a positive
constant depending only on $n$ and, in polar co-ordinates
$\x=r\omega, F(r)$ is the integral mean of $f$ over the unit
sphere $\mathbb{S}^{n-1},$ that is,
\[
F(r) := \frac{1}{|\mathbb{S}^{n-1}|} \int_{\mathbb{S}^{n-1}}
f(r\omega) d\omega.
\]
This has a number of consequences. One is a Hardy-Sobolev type
inequality (Corollary 5) which is that if $ f, \grad f \in
L^2(\R^n), n\ge 3,$ then,
\begin{eqnarray*}
\|F(r)\|^2_{L^{2^{*}}(\R^+; d\mu)} &\le &
    C\left\{\|\nabla
    f\|^2_{L^2(\R^n)}-\frac{(n-2)^2}{4}\|f/|\cdot|\|^2_{L^2(\R^n)}\right\}^{1/n}\nonumber
    \\
    & \times & \|f/|\cdot|\|_{L^2(\R^n)}^{2(1-1/n)}
\end{eqnarray*}
which yields, for any $\delta \in [0,(n-2)^2/4),$
\begin{equation}\label{Stubbepre}
    \|F\|^2_{L^{2^{*}}(\R^+; d\mu)} \le C [\frac{(n-2)^2}{4} -\delta]^{-\frac{(n-1)}{n}}\left\{\|\nabla
    f\|^2_{L^2(\R^n)}- \delta \|f/|\cdot|\|^2_{L^2(\R^n)}\right\}.
\end{equation}
Since $\|F\|_{L^{2^{*}}(\R^+; d\mu)} \le
|\mathbb{S}^{n-1}|^{-1/2^{*}} \|f\|_{L^{2^{*}}(\R^n)},$ by
H\"{o}lder's inequality, (\ref{Stubbepre}) is implied by Stubbe's
result in \cite{ST}, Theorem 1, namely
\begin{equation}\label{Stubbe}
    \|f\|^2_{L^{2^{*}}(\R^n)} \le
    K(n)[\frac{(n-2)^2}{4} -\delta]^{-\frac{(n-1)}{n}}\left\{\|\grad f\|^2_{L^2(\R^n)} -
    \delta\|\frac{f}{|\cdot|}\|^2_{L^2(\R^n)}\right \}
\end{equation}
with optimal constant
\begin{equation}\label{Stubbe2}
K(n) = [\pi n(n-2)]^{-1}\left (\Gamma(n)/\Gamma(n/2)\right)^{2/n}
[(n-2)^2/4]^{(n-1)/n}.
\end{equation}

\bigskip

We also establish the following local Hardy-Sobolev type
inequalities (see Corollaries 6 and 7): if $f$ is supported in the
annulus $A(1/R,R):= \{\x \in \R^n: 1/R \le |\x| \le R\}$ then
\begin{equation}\label{IntcompactMHS}
    \|r F(r)\|^2_{L^{2^{*}}(\R^+;d\mu)} \le C
   (\ln R)^{2(n-1)/n}\left \{\|Lf \|^2_{L^2(\R^n)}
    -(n^2/4)\|f\|^2_{L^2(\R^n)}\right \};
\end{equation}

\begin{equation}\label{IntcompactHSpenult1}
    \|F\|^2_{L^{2^{*}}(\R^+;d\mu)} \le C
    (\ln R)^{2(n-1)/n}\left \{\|\grad f \|^2_{L^2(\R^n)}
    -\Big  [\frac{n-2}{2}\Big ]^2 \Big \|\frac{f}{|\cdot|}\Big \|^2_{L^2(\R^n)}\right \}.
\end{equation}

The inequality (\ref{IntcompactHSpenult1}) is reminiscent of the
case $s=1$ of (2.6) in \cite{FLS} (proved in section 6.4); this is
also proved in \cite{BVa}. To be specific, it is that if $ f \in
C_0^{\infty}(\Omega)$ and $2 \le q <2^{*},$
\begin{equation}\label{1.7B}
\|f\|^2_{L^q(\R^n)} \le C |\Omega|^{2(1/q - 1/2^{*})}
\left\{\|\nabla f\|^2_{L^2(\R^n)} -\Big [\frac{n-2}{2}\Big ]^2
    \Big \|\frac{f}{|\cdot|}\Big \|^2_{L^2(\R^n)}\right \},
\end{equation}
where $|\Omega|$ denotes the volume of $\Omega.$ It is noted in
\cite{FLS}, Remark 2.4, that, in contrast to
(\ref{IntcompactHSpenult1}), the $q$ in (\ref{1.7B}) must be
strictly less than the critical Sobolev exponent $2^{*} =
2n/(n-2)$ if $\Omega$ includes the origin.

The authors are grateful to Rupert Frank, Elliot Lieb and Robert
Seiringer for some valuable comments.

\section{The Hardy-type inequality (\ref{1.2})}

\begin{Theorem}\label{BE}
Let $n\ge 1$ and $1\le p <\infty.$ Then for all $f\in
C_0^{\infty}(\R^n)$
\begin{equation}\label{BE1}
    \int_{\R^n} |(\x\cdot\nabla)f|^p d\x \ge
    \left(\frac{n}{p}\right)^p \int_{\R^n}|f|^p d\x.
\end{equation}
\end{Theorem}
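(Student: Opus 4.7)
The plan is to prove \eqref{BE1} by a simple integration-by-parts identity followed by H\"older's inequality, which is arguably the cleanest way to handle a Hardy-type bound for the operator $L = \x\cdot\nabla$.

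First, I would record the basic adjoint computation. Since $\mathrm{div}(\x) = n$, integration by parts on $\R^n$ gives, for any $g \in C_0^1(\R^n)$,
\begin{equation*}
    \int_{\R^n} (\x\cdot\nabla g)(\x)\, d\x = -n\int_{\R^n} g(\x)\, d\x,
\end{equation*}
so that $L^\ast = -L - n$ formally. Applying this with $g = |f|^p$ (valid by chain rule when $p>1$ since $f\in C_0^\infty$ makes $|f|^p$ at least $C^1$) and computing $L(|f|^p) = p|f|^{p-2}\,\re(\bar f\, Lf)$ yields the key identity
\begin{equation*}
    n\int_{\R^n}|f|^p\, d\x = -p\,\re\int_{\R^n}|f|^{p-2}\,\bar f\, (Lf)\, d\x.
\end{equation*}

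Next I would take absolute values on the right and apply H\"older with exponents $p/(p-1)$ and $p$:
\begin{equation*}
    n\int_{\R^n}|f|^p\, d\x \le p\int_{\R^n}|f|^{p-1}|Lf|\, d\x \le p\left(\int_{\R^n}|f|^p\, d\x\right)^{(p-1)/p}\left(\int_{\R^n}|Lf|^p\, d\x\right)^{1/p}.
\end{equation*}
Dividing through by $(\int |f|^p)^{(p-1)/p}$ and raising to the $p$-th power gives exactly \eqref{BE1}.

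The one point that needs a little care is the endpoint $p=1$, where $|f|$ is merely Lipschitz and the chain rule step is not immediate. The standard fix is to work with $|f|_\vare := (|f|^2+\vare^2)^{1/2}$, for which $L(|f|_\vare) = \re(\bar f\, Lf)/|f|_\vare$ is smooth and bounded by $|Lf|$, run the argument for $|f|_\vare$, and then send $\vare\downarrow 0$ using dominated convergence. This is the only mildly delicate step; everything else is a two-line calculation and the main obstacle is really just choosing the right test function $g=|f|^p$ to feed into the adjoint formula.
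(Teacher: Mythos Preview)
Your argument is correct and is essentially the paper's own proof: both integrate by parts against the vector field $V(\x)=\x$ to obtain the identity $n\int|f|^p = -p\,\re\int |f|^{p-2}\bar f\,Lf$ and then apply H\"older. Your version is in fact a little more streamlined, since after H\"older you divide and raise to the $p$th power directly, whereas the paper passes through Young's inequality $ab\le \varepsilon^p a^p + (p-1)\varepsilon^{-p/(p-1)}b^{p'}$ and then optimises over $\varepsilon$ to recover the same constant $(n/p)^p$; the optimisation is redundant once the H\"older step is in hand.

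One small technical point on your $p=1$ regularisation: the function $|f|_\varepsilon=(|f|^2+\varepsilon^2)^{1/2}$ equals $\varepsilon$ outside $\supp f$, so it is not compactly supported and the identity $\int Lg = -n\int g$ fails for $g=|f|_\varepsilon$. The easy fix is to use $g_\varepsilon:=|f|_\varepsilon-\varepsilon$, which \emph{is} compactly supported, satisfies $Lg_\varepsilon = \re(\bar f\,Lf)/|f|_\varepsilon$ with $|Lg_\varepsilon|\le |Lf|$, and converges to $|f|$; then the argument goes through exactly as you describe.
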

\begin{proof}
For any differentiable function $V: \R^n \rightarrow \R^n$ we have
\begin{eqnarray}\label{BE3}
\int_{\R^n} \text{div} V |f|^p d\x &=& - p\ Re\int_{\R^n} (V\cdot
\nabla f)
|f|^{p-2} \overline{f} d\x \nonumber \\
&\le& p \left(\int_{\R^n} |V \cdot \nabla f|^p d\x
\right)^{1/p}\left(\int_{\R^n} | f|^p d\x\right)^{(p-1)/p}
\nonumber
\\
& \le & \varepsilon^p \int_{\R^n} |V \cdot \nabla f|^p d\x +(p-1)
\varepsilon^{-p/(p-1)}\int_{\R^n} | f|^p d\x \nonumber \\
\end{eqnarray}
for any $ \varepsilon >0.$ Now choose $V(\x) = \x$ to get
\[
 \int_{\R^n} |(\x \cdot \nabla)f|^p d\x  \ge  K(n,\varepsilon) \int_{\R^n}|f|^p d\x
\]
where
\[
K(n,\varepsilon) = \varepsilon^{-p} \{n-(p-1)
\varepsilon^{-p/(p-1)}\}.
\]
This takes its maximum value $ (n/p)^p$ when
$\varepsilon^{p/(p-1)}= p/n.$ This proves the theorem.
\end{proof}

\begin{Remark}
The inequality (\ref{BE1}) implies (\ref{1.1}) for $1 \le p \le
n.$ For we have from
\begin{equation*}
  \nabla(|\x|f) = \frac{\x}{|\x|}f + |\x| \nabla f
\end{equation*}
that
\begin{eqnarray*}
  \|\grad(|\x| f)\|_{L^p(\R^n)} & \ge & \| |\x||\grad f| \|_{L^p(\R^n)} - \|f\|_{L^p(\R^n)} \\
   &\ge & \|(\x \cdot \grad)f \|_{L^p(\R^n)} - \|f\|_{L^p(\R^n)} \\
   & \ge & \left(\frac{n-p}{p}\right) \|f\|_{L^p(\R^n)}
\end{eqnarray*}
whence (\ref{1.1}) on replacing $f(\x)$ by $f(\x)/|\x|.$
\end{Remark}

\section{Calculation of the semigroup $e^{-tL^{*}L}$}

\begin{Theorem}\label{H}
 Let $L= \x\cdot \nabla, \x=r\omega, r=|\x|$.
 Then the semigroup $e^{-t L^*L}$ is given by
 \beq\label{eq:representation1}
  (e^{-tL^*L}\psi)(\x)
  =
  \frac{e^{-tn^2/4}}{\sqrt{4\pi t}}
  r^{-n/2}
  \int_0^\infty e^{-\frac{(\ln r-\ln s)^2}{4t}} s^{-n/2} \psi (s\omega) \,
  s^{n-1}ds
 \eeq
\end{Theorem}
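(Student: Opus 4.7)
The plan is to diagonalize $L^*L$ by reducing to the one-dimensional heat operator on $\mathbb{R}$ via a logarithmic change of variables and a weight-removing unitary.

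First, since $L = \x\cdot\nabla = r\partial_r$ does not see the angular variable, I would pass to polar coordinates and fix $\omega \in \mathbb{S}^{n-1}$ as a parameter, so that $L^2(\R^n)$ decomposes as $L^2(\R^+, r^{n-1}\,dr)\otimes L^2(\mathbb{S}^{n-1})$ and it suffices to compute the radial semigroup on $L^2(\R^+, r^{n-1}\,dr)$. Then I would substitute $u = \ln r$, so that $r\partial_r = \partial_u$ and $r^{n-1}\,dr = e^{nu}\,du$; the operator $L$ becomes $\partial_u$ acting on $L^2(\R, e^{nu}\,du)$. An integration by parts then gives $L^* = -\partial_u - n$ and hence
\begin{equation*}
    L^*L = -\partial_u^2 - n\,\partial_u.
\end{equation*}

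Next, I would trivialize the weight by conjugating with the unitary $U: L^2(\R, e^{nu}\,du)\to L^2(\R, du)$, $(Uf)(u)=e^{nu/2}f(u)$. A direct computation shows
\begin{equation*}
    U L^*L\, U^{-1} = -\partial_u^2 + \frac{n^2}{4},
\end{equation*}
which is the free Hamiltonian on the line shifted by the constant $n^2/4$. Consequently,
\begin{equation*}
    e^{-tL^*L} = e^{-tn^2/4}\, U^{-1}\, e^{t\partial_u^2}\, U,
\end{equation*}
and the semigroup $e^{t\partial_u^2}$ is the standard heat semigroup with Gaussian kernel $(4\pi t)^{-1/2}\exp(-(u-v)^2/4t)$.

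Finally, I would apply this to $\psi$ by writing it in the $u$-coordinate as $\tilde\psi(u,\omega)=\psi(e^u\omega)$, convolve with the heat kernel, multiply by the appropriate powers of $e^{nu/2}$ coming from $U$ and $U^{-1}$, and change variables back via $s=e^v$ (so $dv = s^{-1}\,ds$ and $e^{nv/2}=s^{n/2}$). Grouping the resulting factors of $r^{-n/2}$ and $s^{-n/2}\cdot s^{n-1}$ produces exactly the integral representation \eqref{eq:representation1}. The only real work is the adjoint calculation and the conjugation identity for $U$; everything else is bookkeeping of substitutions. One should also note that since $U L^*L U^{-1}$ is essentially self-adjoint on $C_0^\infty(\R)$, the functional-calculus identity above determines $e^{-tL^*L}$ unambiguously on a core, and hence on all of $L^2(\R^n)$ by density and boundedness.
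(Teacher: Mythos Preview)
Your proposal is correct and is essentially the paper's proof: the paper's unitary $\Phi:L^2(\R^n)\to L^2(\R\times\mathbb{S}^{n-1})$, $(\Phi\psi)(s,\omega)=e^{sn/2}\psi(e^s\omega)$, is exactly the composite of your logarithmic substitution $u=\ln r$ with your weight-removing unitary $U$, and both arguments conclude with the one-dimensional heat kernel. The only cosmetic difference is that the paper reaches the identity $\Phi L^*L\,\Phi^{-1}=-\partial_s^2+\tfrac{n^2}{4}$ via the dilation generator $A=-iL-\tfrac{in}{2}$ (so that $L^*L=A^2+\tfrac{n^2}{4}$ and $\Phi A\Phi^{-1}=-i\partial_s$), whereas you compute $L^*=-\partial_u-n$ directly in the weighted space and then conjugate; the two routes yield the same conjugation identity.
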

\begin{proof} Before embarking on the proof, some preliminary
remarks and results might be helpful. The gist of the proof is
that after a change of co-ordinates, $L^{*}L$ is seen to be
related to the Laplacian in $\R,$ and this then yields the result.
The co-ordinate change is determined by the map $\Phi: L^2(\R^n)
\rightarrow L^2(\R \times \mathbb{S}^{n-1})$ defined by
\beq\label{eq:Phi}
  (\Phi\psi)(s,\omega):= e^{sn/2}\psi(e^s \omega)
 \eeq
for $\omega\in \mathbb{S}^{n-1}$ and $s\in\R$. Note that we equip
$\R\times \mathbb{S}^{n-1}$ with the usual one dimensional
Lebesgue measure on $\R$ and the usual surface measure on
$\mathbb{S}^{n-1}$. Thus $\Phi$ preserves the $L^2$ norm. The
inverse of $\Phi$ satisfies $\Phi^{-1}: L^2(\R\times
\mathbb{S}^{n-1})\to L^2(\R^n)$ and is given by
 \beq\label{eq:Phiinverse}
  (\Phi^{-1}\varphi) (\x)= r^{-n/2} \varphi\big(\ln r, \omega \big).
 \eeq

\bigskip

The dilations $U(t): L^2(\R^n)\to L^2(\R^n)$  given by
 \beq\label{eq:dilations}
 U(t)\psi (\x) := e^{tn/2}\psi(e^{t}\x)
 \eeq
form a group of unitary operators with generator $U(t)=e^{iAt},$
where $A$ is given by
 \bdm
  iA\psi = \frac{\partial}{\partial t} U(t) \psi = (\x\cdot \nabla +\frac{n}{2})\psi
  =
   \frac{1}{2}(\x\cdot\nabla +\nabla\cdot \x)\psi.
 \edm
Thus
 \beq\label{eq:generator}
  A = \frac{1}{i}(\x\cdot \nabla +\frac{n}{2}) = -i L -i\frac{n}{2}.
 \eeq
and so
 \bdm
  L= iA -\frac{n}{2},
 \edm
where $A$ is the self-adjoint generator of dilations in
$L^2(\R^n)$. In particular,
 \beq \label{eq:LstarL}
  L^*L = (-iA-\frac{n}{2}) (iA-\frac{n}{2}) = A^2 +\frac{n^2}{4}.
 \eeq
\bigskip

Since
 \beq\label{eq:Phi2}
  (\Phi\psi)(s,\omega) = (U(s)\psi)(\omega)
 \eeq
for $\omega\in \mathbb{S}^{n-1}$ and $s\in\R$,
\bigskip
it follows from the group property of the dilations $U(\cdot)$
that
 \bdm
  (\Phi(U(t)\psi)) (s,\omega)
  =
  (U(s)(U(t)\psi))(\omega)
  =
  (U(s+t)\psi)(\omega) = (\Phi\psi)(s+t,\omega).
 \edm
In particular, in the new co-ordinates given by $\Phi$, the
dilations $U(t)$ act simply as shifts by $t$ and should be
diagonalizable with the help of a Fourier transform! We now
proceed to confirm this prediction.

\bigskip

 Define $M: L^2(\R^n)\to L^2(\R\times S^{n-1})$ by
 \beq\label{eq:M}
  (M\psi)(\tau,\omega):= \frac{1}{\sqrt{2\pi}}\int_\R e^{-is\tau} (\Phi\psi)(s,\omega)\, ds
  ,
 \eeq
 so that $ M= \mathcal{F} \circ \Phi,$ where $ \mathcal{F}$ is the Fourier transform
 on $\R.$ Then
 \begin{align}\label{eq:diagonalU}
 (MU(t)\psi)(\tau,\omega)
 &=
 \frac{1}{\sqrt{2\pi}} \int e^{-is\tau }(\Phi\psi)(s+t,\omega)\, ds  \nonumber \\
 &=
 \frac{e^{it\tau}}{\sqrt{2\pi}}
 \int e^{-is\tau }(\Phi\psi)(s,\omega)\, ds
 =
 e^{it\tau}(M\psi)(\tau,\omega).
\end{align}
The map $M=\cf \circ \Phi$ is the Mellin transformation and has an
explicit representation using the group structure of $\R^+$ under
multiplication: it is the Fourier transform on this group.

\bigskip

The next step is to show that \beq\label{eq:diagonalA}
  (MA\psi)(\tau,\omega) = \tau(M\psi)(\tau,\omega).
 \eeq
 for $\psi$ in the domain $\mathcal{D}(A)$: it follows that $\psi
 \in \mathcal{D}(A)$ if and only if $(\tau,\omega) \mapsto \tau
 (M\psi)(\tau,\omega) \in L^2(\R\times \mathbb{S}^{n-1}).$ To see
 (\ref{eq:diagonalA}) we note that $iAe^{itA} = \partial_t U(t)$
 and so, from (\ref {eq:diagonalU})
 \begin{align*}
  (M iA e^{iAt}\psi)(\tau,\omega)
  &=
  (M \partial_t U(t)\psi)(\tau,\omega)
  =
  \partial_t (MU(t)\psi)(\tau,\omega) \\
  &=
  \partial_t e^{it\tau} (M\psi)(\tau,\omega)
  =
   i\tau e^{it\tau} (M\psi)(\tau,\omega).
 \end{align*}
Setting $t=0$ yields \eqref{eq:diagonalA}.

\bigskip

We are now in a position to complete the proof of the theorem. We
have $e^{-tL^*L}= e^{-tn^2/4}e^{-tA^2}$ and by \eqref{eq:M}
 \beq\label{eq:diagonal-ehoch-tA^2}
  (M e^{-tA^2}\psi)(\tau,\omega) = e^{-t\tau^2} (M\psi)(\tau,\omega).
 \eeq
So
 \bdm
 e^{-tA^2} = M^{-1} e^{-t\tau^2} M .
 \edm
Since $M= \cf\circ \Phi$, we see that
 \beq \label{X}
  e^{-tA^2}=
  \Phi^{-1} \circ \cf^{-1} \big( e^{-t\tau^2} \cf\circ\Phi\big).
 \eeq
Of course,
 \begin{align*}
  \cf^{-1} \big( e^{-t\tau^2} M\psi \big)(\lambda,\omega)
  &=
  \cf^{-1}\big( e^{-t\tau^2} \cf\circ\Phi\big)(\lambda,\omega)\\
  &=
  \frac{1}{2\pi} \int_\R  \int_\R  e^{i\lambda\tau}  e^{-t\tau^2} e^{-is\tau}
  (\Phi\psi) (s,\omega)ds d\tau \\
  &=
  \frac{1}{2\pi} \int_{\R} \Big(\int_{\R}   e^{-t\tau^2 +i(\lambda-s)\tau} d\tau \Big)
  (\Phi\psi)(s,\omega) \, ds
 \end{align*}
The integral in big parentheses is a Gaussian integral which gives
 \bdm
  \int_{\R}   e^{-t\tau^2 +i(\lambda-s)\tau}d\tau
  =
  \sqrt{\frac{\pi}{t}} e^{-\frac{(\lambda-s)^2}{4t}} .
 \edm
Thus
 \bdm
  \cf^{-1} \big( e^{-t\tau^2} M\psi \big)(\lambda,\omega)
  =
  \frac{1}{\sqrt{4\pi t}}
  \int e^{-\frac{(\lambda-s)^2}{4t}} (\Phi\psi)(s,\omega)\, ds
  =:
   \varphi_t(\lambda,\omega)
 \edm
and, with $\x =r\omega,$
 \begin{align*}
  (e^{-tA^2} \psi)(r\omega)&= (\Phi^{-1}\varphi_t)(r\omega) \\
  &= r^{-n/2} \varphi_t(\ln r,\omega) \\
  &=
  \frac{1}{\sqrt{4\pi t}} r^{-n/2}
  \int_\R e^{-\frac{(\ln r-s)^2}{4t}} (\Phi\psi)(s,\omega)\, ds .
 \end{align*}
Since $(\Phi\psi)(s,\omega)= e^{sn/2}\psi(e^s \omega)$, we get
from the change of variables $z= e^s,$
 \begin{align*}
  (e^{-tA^2} \psi)(r\omega)
  &=
  \frac{1}{\sqrt{4\pi t}} r^{-n/2}
  \int_\R e^{-\frac{(\ln r-s)^2}{4t}} (\Phi\psi)(s,\omega)\, ds \\
  &=
  \frac{1}{\sqrt{4\pi t}} r^{-n/2}
  \int_0^\infty e^{-\frac{(\ln r-\ln z)^2}{4t}} z^{\frac{n}{2}-1}\psi(z\omega)
  dz.
 \end{align*}
So
  \begin{align*}
  (e^{-tL^*L}\psi)(r\omega)&= e^{-tn^2/4}(e^{-tA^2} \psi)(r\omega) \\
  &=
  \frac{1}{\sqrt{4\pi t}} r^{-n/2}e^{-tn^2/4}
  \int_0^\infty e^{-\frac{(\ln r-\ln z)^2}{4t}} z^{\frac{n}{2}-1}\psi(z\omega)\, dz \\
  &=
  \frac{1}{\sqrt{4\pi t}} r^{-n/2}e^{-tn^2/4}
  \int_0^\infty e^{-\frac{(\ln r-\ln z)^2}{4t}} z^{-\frac{n}{2}}\psi(z\omega)\,
  z^{n-1}dz
  \end{align*}
which is \eqref{eq:representation1}.

\bigskip

Once it is realised that $A$ is simply multiplication by $\tau$ in
the sense of \eqref{eq:diagonalA}, it is clear that $A$ is the
momentum operator on $\R$, that is, $\Phi A \Phi^{-1}$ is given by
\begin{equation}\label{eq:diagonalA-2}
\Phi A \Phi^{-1} = -i \partial_s \otimes \id_{\mathbb{S}^{n-1}}
\end{equation}
On using this and the functional calculus we get
 \beq\label{eq:diagonalLstarL}
  \Phi L^*L \Phi^{-1} = (\Phi A \Phi^{-1})^2 +\frac{n^2}{4}
  =
  -\partial_s^2 \otimes \id_{S^{n-1}}   +\frac{n^2}{4}.
 \eeq
Thus, $L^*L= - \Phi^{-1} \partial_s^2\otimes \id_{S^{n-1}} \Phi
+\tfrac{n^2}{4}$ and
 \beq\label{eq:diagonal-heatsemigroup-1}
  e^{-tL^*L}
  =
  e^{-tn^2/4} e^{-t\Phi^{-1} \partial_s^2\otimes \id_{S^{n-1}} \Phi}
  =
  e^{-tn^2/4} \Phi^{-1} e^{-t \partial_s^2\otimes \id_{S^{n-1}} } \Phi
 \eeq
which is a convenient way of expressing
\eqref{eq:representation1}.
\end{proof}

\bigskip

On substituting \eqref{eq:Phi} and \eqref{eq:Phiinverse} and
making an obvious change of variables, we obtain from
\eqref{eq:representation1} the following representation for
$e^{-tA^2}$.

\begin{Corollary}\label{H2} Let $P_t$ denote $e^{-tA^2}.$ Then
\begin{equation}\label{C1}
    \Phi P_t \Phi^{-1} \varphi(r,\omega) = \frac{1}{\sqrt{4\pi t}} \int_{\R}
    \exp\{-\frac{1}{4t}(r-s)^2\} \varphi(s\omega) ds.
\end{equation}
\end{Corollary}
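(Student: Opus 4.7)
The plan is to derive the corollary directly from Theorem~\ref{H} by unwinding the identities $e^{-tA^2}=e^{tn^2/4}e^{-tL^*L}$ from \eqref{eq:LstarL} and the explicit formulas \eqref{eq:Phi}, \eqref{eq:Phiinverse} for $\Phi$ and $\Phi^{-1}$. Concretely, I first write
\[
 (e^{-tA^2}\psi)(r\omega)
 = e^{tn^2/4} (e^{-tL^*L}\psi)(r\omega)
 = \frac{r^{-n/2}}{\sqrt{4\pi t}}
   \int_0^\infty e^{-(\ln r-\ln z)^2/(4t)} z^{-n/2}\psi(z\omega)\, z^{n-1}dz,
\]
so that the factor $e^{-tn^2/4}$ in \eqref{eq:representation1} is absorbed.

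Next I substitute $\psi=\Phi^{-1}\varphi$, which by \eqref{eq:Phiinverse} means $\psi(z\omega)=z^{-n/2}\varphi(\ln z,\omega)$. This absorbs two of the powers of $z$ in the integrand and leaves $z^{-1}dz$; the substitution $s=\ln z$ then converts the integral over $(0,\infty)$ into one over $\R$, giving
\[
 (e^{-tA^2}\Phi^{-1}\varphi)(r\omega)
 = \frac{r^{-n/2}}{\sqrt{4\pi t}}
   \int_{\R} e^{-(\ln r-s)^2/(4t)} \varphi(s,\omega)\, ds.
\]
Finally I apply $\Phi$ via \eqref{eq:Phi}: writing the outer variable as $\rho$, one has $(\Phi\phi)(\rho,\omega)=e^{\rho n/2}\phi(e^\rho\omega)$, and the prefactor $e^{\rho n/2}$ cancels the $(e^\rho)^{-n/2}$ coming from the above formula, leaving the Gaussian convolution in $\rho$. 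Relabeling $\rho\mapsto r$ yields \eqref{C1}.

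There is no genuine obstacle here; the only point requiring care is the bookkeeping of the $e^{sn/2}$ and $r^{-n/2}$ factors and ensuring the Jacobian $z^{-1}dz=ds$ matches exactly the leftover power of $z$ after the substitution. One may alternatively appeal directly to \eqref{eq:diagonalA-2}, from which $\Phi A^2 \Phi^{-1}=-\partial_s^2\otimes\id_{\mathbb{S}^{n-1}}$ and hence $\Phi P_t \Phi^{-1}$ is exactly the standard one--dimensional heat semigroup in the $s$--variable; its kernel is the Gaussian appearing in \eqref{C1}. Both routes recover the same formula, and the second makes the content of the corollary transparent: $P_t$ is, up to the unitary $\Phi$, the free heat evolution on $\R$ fibered over $\mathbb{S}^{n-1}$.
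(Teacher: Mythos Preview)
Your proof is correct and follows exactly the approach the paper indicates: substitute \eqref{eq:Phi} and \eqref{eq:Phiinverse} into \eqref{eq:representation1}, use $e^{-tA^2}=e^{tn^2/4}e^{-tL^*L}$, and change variables $s=\ln z$. The alternative route you mention via \eqref{eq:diagonalA-2} is likewise already present in the paper (see \eqref{eq:diagonalLstarL}--\eqref{eq:diagonal-heatsemigroup-1}), so both of your arguments coincide with the paper's reasoning.
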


\bigskip

\section{The main inequality}
We shall denote the integral mean of a function $f$ on $
\mathbb{S}^{n-1},$ by $ \mathcal{M}(f)(r)$ and when there is no
danger of ambiguity, use the corresponding capital letter; thus
\[
F(r)\equiv \mathcal{M}(f)(r):= |\mathbb{S}^{n-1}|^{-1}
\int_{\mathbb{S}^{n-1}} f(r\omega) d\omega.
\]

We have from (\ref{X})

\begin{equation}\label{Eq1.4B}\begin{array}{rl}
e^{-tL^*L}=& e^{-tn^2/4} e^{-tA^2}\\
e^{-tA^2}=&\Phi^{-1}\circ
\mathcal{F}^{-1}(e^{-t\tau^2}\mathcal{F}\circ \Phi).
\end{array}
\end{equation}
Therefore,
\begin{equation}\label{Eq1.5B}
\Phi[e^{-tA^2}f](r,\omega)=\mathcal{F}^{-1}(e^{-t\tau^2}\mathcal{F}\circ
\Phi)(F) =\frac{1}{\sqrt{2\pi}}\int_{-\infty}^\infty
e^{ir\tau-t\tau^2}\hat{(\Phi F)}(\tau,\omega)d\tau
\end{equation}
in which $\hat{g}:=\mathcal{F}(g)$. However, the representation we
use in our analysis is that given by (\ref{C1}), with now $\Phi f=
g,$
\begin{equation*}
    \Phi P_t \Phi^{-1} g(r,\omega) = \frac{1}{\sqrt{4\pi t}} \int_{\R}
    \exp\{-\frac{1}{4t}(r-s)^2\} g(s\omega) ds,
\end{equation*}
where $P_t :=e^{-tA^2}.$

Define $B^\alpha$ to be the space of all tempered distributions
$g$ on $\R\times \mathbb{S}^{n-1}$ for which the norm
\begin{equation}\label{Eq2.1} \|
g\|_{B^\alpha}:=\underset{t>0}{\sup}\{ t^{-\alpha/2}\| \Phi
e^{-tA^2}\Phi^{-1} |G|\|_{L^\infty(\R)}\}<\infty.
\end{equation}

\begin{Theorem}\label{Main}
Let $ 1\le p <q<\infty$ and suppose that $g$ is such that $\Phi A
\Phi^{-1} g \equiv -i(\partial/\partial r)g \in L^p(\R\times
\mathbb{S}^{n-1})$ and $g \in B^{\theta/(\theta-1)}, \theta =p/q$.
Then there exists a positive constant C, depending on $p$ and $q$
such that
\begin{equation}\label{Eq2.2B}
\|G\|_{L^q(\R)}\le C \|(\partial/\partial r)g
\|^{\theta}_{L^p(\R\times
\mathbb{S}^{n-1})}\|g\|_{B^{\theta/(\theta -1)}}^{1-\theta}.
\end{equation}

\end{Theorem}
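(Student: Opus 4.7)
The proof will follow Ledoux's heat-semigroup method using $P_t := e^{-tA^2}$ from Theorem \ref{H}, which under the conjugation $\Phi$ becomes one-dimensional Gaussian convolution in the $r$-variable (tensored with the identity on $\mathbb{S}^{n-1}$) by Corollary \ref{H2}. Two reductions to a one-dimensional inequality for the spherical mean $G$ are immediate: positivity of the Gaussian kernel gives the pointwise bound
\[
    |(\Phi P_t \Phi^{-1} G)(r)| \le (\Phi P_t \Phi^{-1}|G|)(r) \le t^{\alpha/2}\|g\|_{B^{\alpha}},
    \quad \alpha := \theta/(\theta-1) < 0,
\]
so the Besov hypothesis controls $\Phi P_t \Phi^{-1} G$ uniformly in $L^\infty(\R)$; and differentiation of $G(r) = |\mathbb{S}^{n-1}|^{-1}\int_{\mathbb{S}^{n-1}} g(r,\omega)\,d\omega$ under the integral, followed by H\"older on $\mathbb{S}^{n-1}$, yields $\|G'\|_{L^p(\R)} \le |\mathbb{S}^{n-1}|^{-1/p'}\|\partial_r g\|_{L^p(\R\times\mathbb{S}^{n-1})}$.

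For each $\lambda>0$ I split $G = (G - \Phi P_{t_\lambda}\Phi^{-1}G) + \Phi P_{t_\lambda}\Phi^{-1}G$, with $t_\lambda$ chosen so that $t_\lambda^{\alpha/2}\|g\|_{B^\alpha} = \lambda/2$. The second summand then lies below $\lambda/2$ pointwise, forcing $\{|G|>\lambda\}\subset\{|G - \Phi P_{t_\lambda}\Phi^{-1}G|>\lambda/2\}$. Chebyshev together with the standard one-dimensional heat-semigroup estimate $\|G - \Phi P_t\Phi^{-1}G\|_{L^p(\R)} \le Ct^{1/2}\|G'\|_{L^p(\R)}$ (proved from the explicit Gaussian kernel and a Minkowski inequality applied to $G(r+y)-G(r)$) then gives the distribution-function bound
\[
    \mu\{|G|>\lambda\} \le C\,\lambda^{-q}\,\|g\|_{B^\alpha}^{q-p}\,\|\partial_r g\|_{L^p(\R\times\mathbb{S}^{n-1})}^{p}.
\]
The exponents match because $p + p|\alpha|^{-1} = q$ when $|\alpha|=p/(q-p)$, which is precisely where the index $\theta/(\theta-1)$ of the hypothesis enters.

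The main obstacle is upgrading this weak-$L^q$ estimate to the strong $L^q(\R)$ bound appearing in (\ref{Eq2.2B}), since the naive integration $\|G\|_q^q = q\int_0^\infty \lambda^{q-1}\mu\{|G|>\lambda\}\,d\lambda$ against the $\lambda^{-q}$ tail is logarithmically divergent. My plan is to avoid Chebyshev in the final step: interchange the order of integration, substitute $\lambda \leftrightarrow t_\lambda$, and rewrite the result (up to constants and the correct power of $\|g\|_{B^\alpha}$) as a weighted integral
\[
    \int_0^\infty t^{-p/2-1}\,\|G - \Phi P_t\Phi^{-1}G\|_{L^p(\R)}^{p}\,dt,
\]
which is a $g$-function / Littlewood--Paley quantity for the one-dimensional heat semigroup and is controlled by $\|G'\|_{L^p(\R)}^{p}$ by a standard square-function estimate on the line. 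Taking $q$th roots and combining with the Jensen-type bound for $\|G'\|_{L^p(\R)}$ then produces the claimed product $\|\partial_r g\|_{L^p}^{\theta}\,\|g\|_{B^{\theta/(\theta-1)}}^{1-\theta}$.
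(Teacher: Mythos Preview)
Your derivation of the weak-$L^{q}$ bound in Step~1 is correct and matches the paper (the paper obtains the pseudo-Poincar\'e inequality by duality using Lemma~\ref{Lem4.2}, but your Minkowski argument gives the same estimate). The difficulty you identify---that the naive integration of the weak bound diverges logarithmically---is exactly the obstruction the paper addresses.

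The gap is in your proposed remedy. After the change of variables you arrive at
\[
\int_0^\infty t^{-p/2-1}\,\bigl\|G-\Phi P_t\Phi^{-1}G\bigr\|_{L^p(\R)}^{p}\,dt,
\]
which is precisely the homogeneous Besov seminorm $\|G\|_{\dot B^{1}_{p,p}(\R)}^{p}$ for the one-dimensional heat semigroup. The inequality you invoke, namely $\|G\|_{\dot B^{1}_{p,p}}\le C\|G'\|_{L^p}$, is equivalent to the embedding $\dot W^{1,p}(\R)\hookrightarrow\dot B^{1}_{p,p}(\R)$, and this holds only for $p\ge 2$ (it follows from $\dot W^{1,p}=\dot F^{1}_{p,2}\hookrightarrow\dot F^{1}_{p,p}=\dot B^{1}_{p,p}$ when $p\ge 2$). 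For $1\le p<2$ the inclusion goes the other way and the claimed bound is false. Concretely, take $G$ to be a unit-height trapezoid with slopes of width $\varepsilon$: then $\|G'\|_{L^1}=2$ for all $\varepsilon$, while a direct computation gives $\|G-P_tG\|_{L^1}\sim\sqrt{t}$ for $\varepsilon^2\ll t\ll 1$, so that $\int_{\varepsilon^2}^{1}t^{-3/2}\|G-P_tG\|_{L^1}\,dt\sim\log(1/\varepsilon)\to\infty$. Thus your argument cannot cover the range $1\le p<2$ in the hypothesis.

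The paper avoids this by Ledoux's truncation: for each level $u$ one replaces $G$ by $G_u=(G-u)^+\wedge(c-1)u+(G+u)^-\vee(-(c-1)u)$, so that $(G_u)'=G'$ exactly on $\{u<|G|<cu\}$ and vanishes elsewhere. Applying Chebyshev and the pseudo-Poincar\'e inequality to $G_u$ (rather than to $G$) and then integrating in $u$ converts the divergent $\int u^{-1}\,du$ into $\int_{|G|/c}^{|G|}u^{-1}\,du=\ln c$, while the remainder term involving $\Phi P_{t_u}\Phi^{-1}|G_u-G|$ contributes a fraction of $\|G\|_{L^q}^{q}$ that is absorbed on the left for $c$ large. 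This works uniformly for all $1\le p<q<\infty$ and is where the real content of Step~2 lies.
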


\begin{Remark}\label{weakineq}
An intermediate result in the proof is
\[
\|G\|_{L^{q,\infty}(\R)} \le 2^{\theta +1}
\pi^{-\theta/2}|\mathbb{S}^{n-1}|^{-1}\|(\partial/\partial r)g
\|^{\theta}_{L^p(\R\times
\mathbb{S}^{n-1})}\|g\|_{B^{\theta/(\theta -1)}}^{1-\theta},
\]
where $L^{q,\infty}(\R)$ is the weak-$L^q$ space with norm
\[
\|G\|_{L^{q,\infty}(\R)}:= \left \{ \sup_{u>0} [u^q \lambda(|G|
\ge u)]\right \}^{1/q},
\]
where $ \lambda(|G| \ge u)$ denotes the Lebesgue measure of the
set $\{r \in \R: |G(r)| \ge u\}.$
\end{Remark}

\begin{Remark}\label{domain}
Note that the supposition in the theorem implies that
$f=\Phi^{-1}g \in \mathcal{D}(A),$ the domain of the operator $A$
acting in $L^2(\R^n).$
\end{Remark}

To prove the theorem we first need some preliminary results on
$P_t :=e^{-tA^2}.$

\begin{Lemma}\label{Lem4.1}
For all $t>0$
\begin{equation}\label{Eq4.7}
\|\Phi P_t \Phi^{-1}G]\|_{L^\infty(\R)}\le C t^{-1/2p}
\|G\|_{L^p(\R)},
\end{equation}
where $C \le (4\pi)^{-1/2p} (p')^{-1/2p'}.$

\end{Lemma}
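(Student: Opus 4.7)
The plan is to invoke the explicit heat-kernel representation for $\Phi P_t \Phi^{-1}$ furnished by Corollary \ref{H2} and then apply H\"older's inequality. By \eqref{C1}, when $G$ is a function of $r$ alone the operator $\Phi P_t \Phi^{-1}$ acts as convolution on $\R$ with the one-dimensional Gaussian kernel $K_t(r) := (4\pi t)^{-1/2} e^{-r^2/(4t)}$ (the angular variable is passive here because $G$ does not depend on $\omega$).

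First I would establish the pointwise estimate
\begin{equation*}
|(\Phi P_t \Phi^{-1} G)(r)| \le \int_\R K_t(r-s)\, |G(s)|\, ds,
\end{equation*}
and apply H\"older's inequality with conjugate exponents $p$ and $p'$ in the $s$ variable to obtain the majorant $\|K_t(r-\cdot)\|_{L^{p'}(\R)} \|G\|_{L^p(\R)}$ uniformly in $r$.

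Second I would evaluate the $L^{p'}$ norm of the Gaussian. After the substitution $u=s-r$, the identity $\int_\R e^{-\alpha u^2}\, du = \sqrt{\pi/\alpha}$ applied with $\alpha = p'/(4t)$ gives $\|K_t(r-\cdot)\|_{L^{p'}(\R)} = (4\pi t)^{-1/2}(4\pi t/p')^{1/(2p')}$. Using $1/p' = 1 - 1/p$, the two powers of $4\pi t$ combine to $(4\pi)^{-1/(2p)} t^{-1/(2p)}$, while the factor $(p')^{-1/(2p')}$ survives. Taking the supremum over $r\in\R$ then yields the lemma with $C = (4\pi)^{-1/(2p)}(p')^{-1/(2p')}$.

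There is no genuine obstacle: the argument is the textbook $L^p\to L^\infty$ smoothing estimate for the Euclidean heat semigroup, and the only real work is the exponent bookkeeping needed to recover the sharp constant claimed. The one point that warrants a sanity check is that $G$ is treated as a function on $\R$ alone, so that the integration in \eqref{C1} runs only over $s$ and not also over $\mathbb{S}^{n-1}$; this matches the norm $\|G\|_{L^p(\R)}$ appearing on the right-hand side.
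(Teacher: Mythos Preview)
Your proposal is correct and follows essentially the same route as the paper: invoke the explicit Gaussian kernel representation \eqref{C1}, apply H\"older's inequality in the $s$-variable, and evaluate the resulting $L^{p'}$ norm of the Gaussian to obtain the constant $(4\pi)^{-1/(2p)}(p')^{-1/(2p')}$. The only difference is presentational---you phrase it as a convolution estimate and spell out the exponent bookkeeping, while the paper writes the H\"older step directly on the integral---but the argument is identical.
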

\begin{proof}
From (\ref{C1}) we have by H\"{o}lder's inequality that
\begin{eqnarray}\label{Eq1.6BB}
\left|\Phi P_t \Phi^{-1} G(r)\right|&\le& \frac{1}{\sqrt{4\pi
t}}\left(\int_{\R} e^{-\frac{p'}{4t}(r-s)^2}ds\right)^\frac{1}{p'}
\left(\int_{\R} |G(s)|^pds\right)^{1/p} \nonumber \\
&\le & C t^{-1/2p} \|G\|_{L^p(\R)}
\end{eqnarray}
with the indicated constant.
\end{proof}

\begin{Lemma}\label{Lem4.2}
For all $t>0$
$$
\| \Phi A P_t \Phi^{-1}G\|_{L^p(\R)}\le (\pi t)^{-1/2}\|
G\|_{L^p(\R)}
$$
and similarly
\[
\| \Phi A P_t \Phi^{-1}g\|_{L^p(\R\times \mathbb{S}^{n-1})}\le
(\pi t)^{-1/2}\| g\|_{L^p(\R\times \mathbb{S}^{n-1})} \
\]
\end{Lemma}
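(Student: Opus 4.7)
The plan is to exploit the fact that, after conjugation by $\Phi$, the operator $A$ is simply $-i\partial_r$ (by \eqref{eq:diagonalA-2}), so that $\Phi A P_t \Phi^{-1}$ becomes $-i\partial_r$ applied to the Gaussian convolution kernel in Corollary \ref{H2}. Once this is written as a convolution operator in the $r$ variable, the estimate will follow from Young's convolution inequality, and the constant $(\pi t)^{-1/2}$ will come out of an explicit one-dimensional Gaussian moment computation.

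First, I would differentiate under the integral in \eqref{C1} to write
\begin{equation*}
\Phi A P_t \Phi^{-1} G(r) = -i\,\partial_r \left[\frac{1}{\sqrt{4\pi t}}\int_{\R} e^{-(r-s)^2/(4t)} G(s)\,ds\right]
= \frac{i}{\sqrt{4\pi t}}\int_{\R}\frac{r-s}{2t}\,e^{-(r-s)^2/(4t)} G(s)\,ds,
\end{equation*}
so that $|\Phi A P_t \Phi^{-1} G| \le K_t * |G|$ where $K_t(u) := \frac{|u|}{2t\sqrt{4\pi t}}\, e^{-u^2/(4t)}$. Young's inequality then yields
$\|\Phi A P_t \Phi^{-1} G\|_{L^p(\R)} \le \|K_t\|_{L^1(\R)}\,\|G\|_{L^p(\R)}$.

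Next I compute $\|K_t\|_{L^1(\R)}$ explicitly. Substituting $v = u^2/(4t)$ gives $\int_0^\infty u\, e^{-u^2/(4t)}\,du = 2t$, so
\begin{equation*}
\|K_t\|_{L^1(\R)} = \frac{2}{2t\sqrt{4\pi t}}\int_0^\infty u\,e^{-u^2/(4t)}\,du = \frac{2\cdot 2t}{2t\sqrt{4\pi t}} = \frac{1}{\sqrt{\pi t}} = (\pi t)^{-1/2},
\end{equation*}
which is precisely the desired constant. This settles the first inequality.

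For the inequality on $\R\times\mathbb{S}^{n-1}$, note that $\Phi A P_t \Phi^{-1}$ acts only in the $r$ variable, with $\omega$ entering merely as a parameter. Applying the scalar bound just proved to $r\mapsto g(r,\omega)$ for each fixed $\omega$ gives
\begin{equation*}
\int_{\R} \bigl|\Phi A P_t \Phi^{-1} g(r,\omega)\bigr|^p\,dr \le (\pi t)^{-p/2} \int_{\R} |g(r,\omega)|^p\,dr,
\end{equation*}
and integrating over $\omega \in \mathbb{S}^{n-1}$ and taking $p$-th roots yields the second inequality. There is no real obstacle here: the only mild point is justifying differentiation under the integral, which is standard since the Gaussian and its $r$-derivative decay rapidly in $s$ and $G$ (resp.\ $g(\cdot,\omega)$) lies in $L^p$.
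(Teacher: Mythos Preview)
Your proof is correct and follows essentially the same route as the paper: both differentiate the Gaussian kernel in \eqref{C1}, recognise the result as a convolution, apply Young's inequality, and compute the $L^1$ norm of the kernel to obtain the constant $(\pi t)^{-1/2}$. The only cosmetic difference is that you invoke \eqref{eq:diagonalA-2} directly to identify $\Phi A\Phi^{-1}$ with $-i\partial_r$, whereas the paper phrases this via the relation $\tfrac{d}{dr}(\Phi H)=i\Phi AH$; and you spell out the $\omega$-integration for the second inequality, which the paper leaves implicit.
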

\begin{proof}
From (\ref{C1}) we have
\begin{equation*}
\frac{d}{dr} \left\{\Phi  P_t \Phi^{-1}G(r)\right \} =
\frac{1}{\sqrt{4\pi t}}\int_{\R}\frac{(s-r)}{2t} \exp
\left(-\frac{1}{4t}[s-r]^2\right) G(s) ds
\end{equation*}
and hence by Young's inequality for convolutions (see \cite{EE},
Theorem V.1.2)
\begin{align*}
   &\|\frac{d}{dr} \left\{\Phi  P_t \Phi^{-1}G(r)\right \}\|_{L^p(\R)} \\
   & \le \left\{\frac{1}{\sqrt{16 \pi t^3}}\int_{\R}|z|\exp
\left(-\frac{1}{4t}z^2\right)dz \right \} \|G\|_{L^p(\R)}\\
& =  (\pi t)^{-1/2}\|G\|_{L^p(\R)}.
\end{align*}
The lemma follows since
\begin{eqnarray}\label{Z} \frac{d}{d
r}\left(\Phi H\right)
&=&\Phi[\frac{n}{2}H+LH]\nonumber \\
&=&i\Phi AH.
\end{eqnarray}
\end{proof}

We are now ready to prove our Theorem \ref{Main}. Note that the
assertion $\Phi A\Phi^{-1} g \equiv -i(\partial/\partial r)g $
follows from (\ref{Z})(see also (\ref{eq:diagonalA-2})). Our proof
is inspired by that of Theorem 1 in \cite{L}.

\begin{proof}

{\bf{Step 1}}

By homogeneity we may assume that $\|
g\|_{B^{\theta/(\theta-1)}}\le 1$, so that for all $r\in\R $ and
$t
>0$
$$
\Phi e^{-tA^2}\Phi^{-1}|G(r)|\le t^{\theta/2(\theta-1)}.
$$
For all $u>0$ define $t_u:=u^{2(\theta-1)/\theta}$ so that
\begin{equation}\label{Eq2.3B}
\Phi e^{-t_uA^2}\Phi^{-1}|G(r)|\le u.
\end{equation}
Let $\lambda$ denote Lebesgue measure on $\R$. With
$P_t:=e^{-tA^2}$,
\begin{eqnarray}\label{Eq2.4B}
u^q\lambda(|G|\ge 2u) & \le & u^q  \lambda(|G(r)-\Phi
P_{t_u}\Phi^{-1}G(r)| \ge u) \nonumber \\& \le & u^{q-p}\int_{\R}
|G(r)-\Phi P_{t_u}\Phi^{-1}G(r)|^pdr \nonumber \\
& = & u^{q-p}
\int_{\R}\left|\frac{1}{|\mathbb{S}^{n-1}|}\int_{\mathbb{S}^{n-1}}[g(r\omega)-\Phi
P_{t_u}\Phi^{-1}g(r\omega)]d\omega\right|^p dr \nonumber \\
& \le & u^{q-p}\frac{1}{|\mathbb{S}^{n-1}|} \|g -\Phi
P_{t_u}\Phi^{-1}g\|^p_{L^p(\R \times \mathbb{S}^{n-1})}.
\end{eqnarray}

Since $f:= \Phi^{-1}g$ is assumed to lie in $\mathcal{D}(A)$, the
domain of $A$, we have
$$
\frac{\partial}{\partial t}P_tf =A^2P_tf,\q P_0f=f,\ \
$$
and consequently
$$
(P_tf-f)(t)=\int_0^t A^2P_s f ds.
$$
Set $k:= \Phi^{-1} h$ where $h \in C_0^{\infty}(\R \times
\mathbb{S}^{n-1}).$ Then $k \in C_0^{\infty}(\R^n \setminus\{0\})$
and hence lies in $ \mathcal{D}(A).$ We therefore have with
$\x=(r\omega)$
 \begin{align*}
&\int_{\R\times \mathbb{S}^{n-1}} h(r\omega)(\Phi P_t
\Phi^{-1}g-g)(r\omega)drd\omega =
\int_{\R^n} k(\x) \left(P_t f(\x) -f(\x)\right)d\x \\
&= \int_0^t\int_{\R^n} k(\x)A^2P_s f(\x) d\x ds\\
&=\int_0^t\int_{\R^n} [AP_sk](\x)[Af](\x)d\x ds\\
&= \int_0^t\int_{\R\times \mathbb{S}^{n-1}} [\Phi AP_s
\Phi^{-1}h](r\omega)[\Phi
A\Phi^{-1}g](r\omega)dr d \omega ds\\
&\le  \|\Phi A \Phi^{-1}g\|_{L^p(\R\times
\mathbb{S}^{n-1})}\int_0^t\|\Phi AP_s \Phi^{-1}
h\|_{L^{p'}(\R\times
\mathbb{S}^{n-1})}ds\\
&\le  2\pi^{-\frac12} t^{\frac12} \|\Phi A
\Phi^{-1}g\|_{L^p(\R\times \mathbb{S}^{n-1})}\|
h\|_{L^{p'}(\R\times \mathbb{S}^{n-1})}
\end{align*}
by Lemma~\ref{Lem4.2}. Since $C_0^{\infty}(\R\times
\mathbb{S}^{n-1})$ is dense in $L^{p'}(\R\times \mathbb{S}^{n-1})$
we obtain the pseudo-Poincar\'{e} inequality (see \cite{S})
\begin{equation}\label{Eq2.5B}
\|\Phi P_t \Phi^{-1}g-g\|_{L^p(\R\times \mathbb{S}^{n-1})}\le
2\pi^{-\frac12}t^{\frac12} \|\Phi A\Phi^{-1}g\|_{L^p(\R\times
\mathbb{S}^{n-1})}.
\end{equation}
Thus, in (\ref{Eq2.4B}),
\begin{eqnarray}\label{Eq2.6}
u^q\lambda(|G|\ge 2u)&\le & 2^p\pi^{-\frac{p}{2}} u^{q-p}
t_u^{p/2}|\mathbb{S}^{n-1}|^{-1}
\|\Phi A \Phi^{-1}g\|^p_{L^p(\R\times \mathbb{S}^{n-1})}\nonumber \\
&= & 2^p \pi^{-\frac{p}{2}}|\mathbb{S}^{n-1}|^{-1} \|\Phi A
\Phi^{-1}g\|^p_{L^p(\R\times \mathbb{S}^{n-1})},
\end{eqnarray}
whence
\begin{equation}\label{Eq2.7}
\|G\|_{L^{q,\infty}(\R)}\le   2^{\theta
+1}\pi^{-\theta/2}|\mathbb{S}^{n-1}|^{-1}\|\Phi A
\Phi^{-1}g\|^\theta_{L^p(\R\times \mathbb{S}^{n-1})},
\end{equation}
where $L^{q,\infty}$ denotes the weak $L^q$ norm.

{\bf{Step 2}}

In this step we show that the $L^{q,\infty}$ norm in (\ref{Eq2.7})
can be replaced by the $L^q$ norm if we assume that $G \in
L^q(\R).$ We may, and shall hereafter in the proof, assume that
our functions $G$ are real-valued. Following Ledoux in \cite{L},
we write
\begin{equation}\label{1}
    5^{-q}\|G\|^q_{L^q(\R)} = \int_0^{\infty}\lambda(|G| \ge 5u\})
    du^q
\end{equation}
and for $u>0$ define $G_u$ by
\begin{equation}\label{2}
    G_u =(G-u)^+ \wedge ((c-1)u) + (G+u)^-\vee(-(c-1)u)
\end{equation}
where $c\ge5,$ and $\wedge, \vee $ denote the minimum and maximum
respectively. It follows that for $u\le |G| \le cu$
\begin{equation}\label{3}
    \frac{d}{dr} G_u = \frac{d}{dr} G
\end{equation}
and is zero otherwise. Also,
\begin{equation}\label{4}
    |G| \ge 5u \Longrightarrow |G_u| \ge 4u
\end{equation}
and hence
\begin{equation}\label{5}
    \int_0^{\infty} \lambda(|G| \ge 5u) du^q \le  \int_0^{\infty} \lambda(|G_u| \ge 4u)
    du^q.
\end{equation}
We continue to assume that $\|g\|_{B^{\theta/(\theta-1)}}\le 1$
and have $t_u = u^{2(\theta-1)/\theta}, \theta = p/q.$ We have
\begin{eqnarray}\label{5}
  |G_u|  &\le & |G_u-\Phi P_{t_u}\Phi^{-1}G_u| + |\Phi P_{t_u}\Phi^{-1}[G_u-G]| +|\Phi P_{t_u}\Phi^{-1}G| \nonumber \\
   &\le & |G_u-\Phi P_{t_u}\Phi^{-1}G_u| + \Phi P_{t_u}\Phi^{-1}|G_u-G| +u
\end{eqnarray}
since $ |\Phi P_{t_u}\Phi^{-1} G|\le \Phi P_{t_u}\Phi^{-1} |G| \le
u.$ Thus $|G_u| \ge 4u$ implies that
\begin{equation}\label{6}
|G_u-\Phi P_{t_u}\Phi^{-1}G_u| + \Phi P_{t_u}\Phi^{-1}|G_u-G| \ge
3u .
\end{equation}
This in turn implies that the set $ \{r:|G_u| \ge 4u\}$ is
contained in $\{r:|G_u- \Phi P_{t_u}\Phi^{-1}G_u| \ge u \}\bigcup
\{r:\Phi P_{t_u}\Phi^{-1}|G_u-G| \ge 2u \}$. It follows that
\begin{eqnarray}\label{7}
  \int_0^{\infty} \lambda(|G_u| \ge 4u) du^q & \le & \int_0^{\infty} \lambda( |G_u-\Phi P_{t_u}\Phi^{-1}G_u| \ge
u) du^q \nonumber \\
   &+& \int_0^{\infty} \lambda(\Phi P_{t_u}\Phi^{-1}|G_u-G| \ge 2u)
   du^q.
   \nonumber \\
\end{eqnarray}

From the pseudo-Poincar\'{e} inequality (\ref{Eq2.5B}) we have,
with $C= 2\pi^{-1/2},$
\begin{equation}\label{9}
  \|G_u -\Phi P_{t_u}\Phi^{-1} G_u\|_{L^p(\R)}
   \le Ct_u^{1/2} \|\Phi A \Phi^{-1}G_u\|_{L^p(\R)}
\end{equation}
and hence, on using (\ref{Z}), (4.15) and (4.21), and recalling
that $ t_u=u^{2(\theta-1)/\theta},$ so that $u^{-p} t_u^{p/2} =
u^{-q},$
\begin{eqnarray}\label{10}
   \lambda( |G_u-\Phi P_{t_u}\Phi^{-1}G_u| \ge
u)& \le & u^{-p} \int _0^{\infty} |G_u-\Phi P_{t_u}\Phi^{-1}G_u|^p dr \nonumber \\
   &\le & Cu^{-p}t_u^{p/2} \|\Phi A \Phi^{-1}G_u \|^p_{L^p(\R)} \nonumber \\
   &=& C u^{-q} \|\frac{d}{dr}G_u \|^p_{L^p(\R)}\nonumber  \\
      & = & Cu^{-q} \int_{u<|G|<cu}|\frac{d}{dr}G|^p dr
   \nonumber \\
   & = & C u^{-q} \int_{u<|G|<cu} |\Phi A \Phi^{-1}G|^p
   dr.\nonumber \\
\end{eqnarray}
Hence
\begin{align}\label{11}
 & \int_0^{\infty} \lambda( |G_u- \Phi P_{t_u}\Phi^{-1}G_u| \ge
u)du^q \nonumber \\
& \le  C \int_0^{\infty} \left\{ u^{-q} \int_{u<|G|<cu} |\Phi A \Phi^{-1}G|^p dr\right \} d u^q \nonumber  \\
   &= C \int_{\R} |\Phi A \Phi^{-1}G (r)|^p \left \{\int_{|G|/c}^{|G|} u^{-q} d u^q \right \}dr\nonumber  \\
   &= C q \ln c \|\Phi A \Phi^{-1}G\|^p _{L^p(\R)} \nonumber \\
   & \le C q \ln c \frac{1}{|\mathbb{S}^{n-1}|}\|(\partial/\partial r)g\|^p _{L^p(\R\times \mathbb{S}^{n-1})}
\end{align}
by (4.7).

\bigskip

Next we consider $ \lambda(\Phi P_{t_u}\Phi^{-1}|G_u -G| \ge 2u).$
First, we claim that
\begin{equation}\label{12}
    \Phi P_{t_u}\Phi^{-1}|G_u -G| \le u + \Phi P_{t_u}\Phi^{-1}|G| \chi_{\{|G| \ge
    cu\}},
\end{equation}
where $\chi_I$ denotes the characteristic function of the set $I.$
We have from (4.14)
\begin{eqnarray}\label{14}
    |G_u -G|&\le &|G_u -G|\chi_{\{|G| \le cu\}} + |G_u -G|\chi_{\{|G| \ge
    cu\}}\nonumber \\
    &\le & u + |G_u -G|\chi_{\{|G| \ge
    cu\}}.
\end{eqnarray}
 Hence, from (\ref{C1}),
\begin{align}\label{15}
  &\Phi P_{t_u}\Phi^{-1}| G_u -G|
    \le  \frac{u}{\sqrt{4\pi t_u}} \int_{\R}\exp\{-\frac{1}{4t_u}(r-s)^2 \}
   ds \nonumber \\
   & +  \frac{1}{\sqrt{4\pi t_u}}\int_{\R}\exp\{-\frac{1}{4t_u}(r-s)^2 \}|G- G_u|\chi_{\{|G| \ge cu\}}
   ds \nonumber \\
   & =  u + \frac{1}{\sqrt{4\pi t_u}}\int_{\R}\exp\{-\frac{1}{4t_u}(r-s)^2 \}|G- G_u|\chi_{\{|G| \ge cu\}}
   ds .\nonumber \\
\end{align}
For $|G| \ge cu,$ we have from the construction of $G_u$ in (4.14)
that
\begin{equation}\label{16}
    |G-G_u| \le |G|
\end{equation}
and hence on substituting in (\ref{15}) we get
\begin{eqnarray}\label{16}
  \Phi P_{t_u}\Phi^{-1}|G_u -G|& \le & u + \frac{1}{\sqrt{4\pi t_u}}\int_{\R}\exp\{-\frac{1}{4t_u}(r-s)^2 \}|G|
  \chi_{\{|G| \ge cu\}}
   ds \nonumber \\
   & = & u + \Phi P_{t_u}\Phi^{-1}|G|\chi_{\{|G| \ge cu\}},
\end{eqnarray}
as claimed in (4.24). This gives
\begin{align}\label{17}
  &\int_0^{\infty} \lambda(\Phi P_{t_u}\Phi^{-1}|G_u-G| \ge 2u) du^q
  \nonumber \\
 &\le  \int_0^{\infty}\lambda(\Phi P_{t_u}\Phi^{-1}|G|\chi_{\{|G| \ge cu\}}\ge u) du^q \nonumber \\
   &\le  \int_0^{\infty} u^{-1} \left(\int_{\R}\Phi
   P_{t_u}\Phi^{-1}|G|\chi_{\{|G| \ge cu\}} dr
   \right) du^q \nonumber \\
   & =  \int_0^{\infty} \frac{1}{\sqrt{4\pi
   t_u}}\int_{\R}
    \left[\int_0^{\infty}\exp\{-\frac{1}{4t_u}(r-s)^2\} |G|
    \chi_{\{|G| \ge cu\}}ds\right] dr \frac{du^q}{u} \nonumber \\
   & \le  \int_0^{\infty}u^{-1}\int_0^{\infty}|G|\chi_{\{|G| \ge
   cu\}}ds du^q \nonumber \\
    &=  q \int_0^{\infty} |G| \left( \int_0^{|G|/c}
   u^{q-2}du \right)
   ds \nonumber \\
   & =  \frac{q}{(q-1) c^{q-1}}\|G\|^q _{L^q(\R)}.
\end{align}
We have therefore shown that \[ 5^{-q}\|G\|^q_{L^q(\R)} \le C q
\ln c\|(\partial/\partial r)g\|^p_ {L^p(\R\times
\mathbb{S}^{n-1})} + \frac{q}{(q-1)c^{q-1}}\|G\|^q_{L^q(\R)}
\]
which on choosing $c$ large enough yields (4.4) under the
additional assumption $G\in L^q(\R).$

{\bf{Step 3}}

The final step is to remove the assumption $ G\in L^q(\R)$ in Step
2. We again follow Ledoux's approach and define
\[
N_{\varepsilon}(G) = \int_{\varepsilon}^{1/\varepsilon}
\lambda(|G|\ge 5u) d(u^q) < \infty.
\]
From (4.17), (4.20), (4.23) and (4.29) it is seen that
\begin{equation}\label{1}
N_{\varepsilon}(G)\le C q \ln c \|(\partial/\partial r)g\|^p_
{L^p(\R\times \mathbb{S}^{n-1})}
+\int_{\varepsilon}^{1/\varepsilon} \frac{1}{u}\left(\int
|G|\chi_{\{|G|>cu\}}d \lambda\right)d(u^q).
\end{equation}
We shall use the fact that
\begin{equation}\label{2}
\int |G|\chi_{\{|G|>cu\}}d \lambda = -\int_{cu}^{\infty}\alpha d
\lambda(\alpha)
\end{equation}
where
\[
\lambda(\alpha) := \lambda\{x: |G(x)| > \alpha\}.
\]
On integration by parts, we have for all $\Lambda > cu,$ that
\begin{eqnarray*}
-\int_{cu}^{\Lambda} \alpha d \lambda(\alpha) &=& -\left[\alpha
\lambda(\alpha)\right]_{cu}^{\Lambda} +
\int_{cu}^{\infty}\lambda(\alpha) d\alpha \\
& \le & cu \lambda(cu) + \int_{cu}^{\infty} \lambda(\alpha)d
\alpha
\end{eqnarray*}
and hence
\begin{equation}\label{3}
    \int |G|\chi_{\{|G|>cu\}}d \lambda \le cu \lambda(cu) + \int_{cu}^{\infty} \lambda(\alpha)d
\alpha
\end{equation}
 From this we infer that
\begin{eqnarray}\label{4}
 I &:=& \int_{\varepsilon}^{1/\varepsilon} \frac{1}{u}\left(\int
|G|\chi_{\{|f|>cu\}}d \lambda\right)d(u^q)\nonumber \\
 & \le & c\int_{\varepsilon}^{1/\varepsilon}\lambda(cu) d(u^q)
 + \int_{\varepsilon}^{1/\varepsilon}\left( \int_{cu}^{\infty}\lambda(\alpha) d\alpha\right)qu^{q-2}du \nonumber \\
   &=& c\int_{\varepsilon}^{1/\varepsilon}\lambda(cu) d(u^q) + I_1
\end{eqnarray}
say. We now apply Fubini's Theorem to $I_1.$
\begin{eqnarray}\label{5}
  I_1 &=& \int_{\alpha = c\varepsilon}^{c/\varepsilon}\lambda(\alpha) d\alpha
  \int_{u=\varepsilon}^{\alpha/c} qu^{q-2}du \nonumber \\
   &+& \int_{\alpha = c/\varepsilon}^{\infty}\lambda(\alpha) d\alpha
  \int_{u=\varepsilon}^{1/\varepsilon} qu^{q-2}du \nonumber \\
  &=& c \int_{t = \varepsilon}^{1/\varepsilon}\lambda(ct) dt  \left[
  \frac{q}{(q-1)}u^{q-1}\right]_{\varepsilon}^t + c \int_{t = 1/\varepsilon}^{\infty}\lambda(ct) dt  \left[
  \frac{q}{(q-1)}u^{q-1}\right]_{\varepsilon}^{1/\varepsilon}
  \nonumber \\
  &\le & \frac{cq}{(q-1)}\int_{ \varepsilon}^{1/\varepsilon}t^{q-1}\lambda(ct)
  dt+ \frac{cq}{(q-1)}\frac{1}{\varepsilon^{q-1}}\int_{1/\varepsilon}^{\infty}\lambda(ct)
  dt\nonumber \\
  &=& \frac{c}{(q-1)}\int_{ \varepsilon}^{1/\varepsilon}\lambda(ct)
  d(t^{q})+ \frac{cq}{(q-1)}\frac{1}{\varepsilon^{q-1}}\int_{ 1/\varepsilon}^{\infty}\lambda(ct)
  dt.
\end{eqnarray}
It follows from (\ref{4}) and (\ref{5}) that
\begin{equation}\label{6}
    I \le \frac{cq}{(q-1)}\int_{ \varepsilon}^{1/\varepsilon}\lambda(ct)
  d(t^{q})+ \frac{cq}{(q-1)}\frac{1}{\varepsilon^{q-1}}\int_{ 1/\varepsilon}^{\infty}\lambda(ct)
  dt.
\end{equation}
On setting $ t= (c/5)u, \varepsilon = (5/c)\tilde{\varepsilon}$ we
have
\begin{eqnarray}\label{7}
  \frac{cq}{(q-1)}\int_{ \varepsilon}^{1/\varepsilon}\lambda(ct)
  d(t^{q}) &=& \frac{q}{(q-1)}\frac{5^q}{c^{q-1}}N_{ \tilde{\varepsilon}}(G) \nonumber \\
   &\le &  \frac{q}{(q-1)}\frac{5^q}{c^{q-1}}N_{ \varepsilon}(G)
\end{eqnarray}
since $ \tilde{\varepsilon} \ge \varepsilon.$ We also have in
(\ref{6})
\begin{eqnarray*}
  \int_{1/\varepsilon}^{\infty}\lambda(|G|>cu)du  &=&
  \int_{1/\varepsilon}^{\infty}(cu)^q\lambda(|G|>cu)(cu)^{-q}du  \\
   &\le & \frac{1}{c^q}\|G \|^q_{q,\infty}
   \int_{1/\varepsilon}^{\infty} u^{-q} du  \\
   &=& \frac{\varepsilon^{q-1}}{c^q(q-1)}\|G\|^q_{L^{q,\infty}(\R)}
\end{eqnarray*}
and so
\begin{equation}\label{8}
    \frac{cq}{(q-1) \varepsilon^{q-1}} \int_{1/\varepsilon}^{\infty}\lambda(|G|>cu)du
    \le \frac{q}{(q-1)^2c^{q-1}}\|G\|^q_{L^{q,\infty}(\R)}.
\end{equation}
We therefore have from (\ref{1})
\begin{eqnarray}\label{9}
  N_{\varepsilon}(G) &\le & C q \ln c \|(\partial/\partial r)g\|^p_ {L^p(\R\times
\mathbb{S}^{n-1})}
  + \frac{q}{(q-1)}\frac{5^q}{c^{q-1}}N_{ \varepsilon}(G)\nonumber  \\
    &+& \frac{q}{(q-1)^2c^{q-1}}\|G\|^q_{L^{q,\infty}(\R)}.
\end{eqnarray}
On choosing $c$ large enough it follows that $\sup_{\varepsilon
>0}N_{\varepsilon}(G) < \infty $ and so $G \in L^q(\R).$
The proof is therefore complete.
\end{proof}

The theorem has two natural corollaries featuring the Hardy-type
inequality (2.1), the first an inequality of Sobolev type , and
the second of Gagliardo-Nirenberg type.

\begin{Corollary}\label{Sob}
Let $ p^{*} := np/(n-p), 1\le p <n, $ and suppose $g,
(\partial/\partial r)g \in L^p(\R\times \mathbb{S}^{n-1}).$ Then
\begin{equation}\label{Sob1}
    \|G\|_{L^{p^{*}}(\R)} \le C \|(\partial/\partial r)g\|^{1/n}_{L^p(\R\times \mathbb{S}^{n-1})}
    \|g\|^{(n-1)/n}_{L^p(\R\times \mathbb{S}^{n-1})}.
\end{equation}
If $G$ is supported in $[-\Lambda, \Lambda],$ then
\begin{equation}\label{compactSob1}
    \|G\|_{L^{p^{*}}(\R)} \le C \Lambda^{(n-1)/n}\|(\partial/\partial r)g\|_{L^p(\R\times
    \mathbb{S}^{n-1})}.
\end{equation}
\end{Corollary}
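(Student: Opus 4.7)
The plan is to deduce Corollary~\ref{Sob} from the classical one-dimensional Gagliardo--Nirenberg inequality applied to the spherical mean $G(r) = |\mathbb{S}^{n-1}|^{-1}\int_{\mathbb{S}^{n-1}} g(r,\omega)\,d\omega$, with Jensen's inequality absorbing the $(n-1)$-dimensional structure. Notably, this particular corollary does not seem to require the Besov/heat-semigroup machinery of Theorem~\ref{Main}: a quick check of scaling exponents shows that applying Theorem~\ref{Main} at $q=p^*$ together with the $L^p\hookrightarrow B^{-1/p}$ embedding produced by Lemma~\ref{Lem4.1} reproduces the corollary only in the special case $p=n-1$.

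The first step is to bound the one-dimensional norms of $G$ and $G'$ by the cylinder norms of $g$ and $\partial_r g$. Applying Jensen's inequality to the convex function $t\mapsto|t|^p$ against the normalized measure $|\mathbb{S}^{n-1}|^{-1}d\omega$ gives $|G(r)|^p \le |\mathbb{S}^{n-1}|^{-1}\int_{\mathbb{S}^{n-1}}|g(r,\omega)|^p\,d\omega$, whence $\|G\|_{L^p(\R)} \le |\mathbb{S}^{n-1}|^{-1/p}\|g\|_{L^p(\R\times\mathbb{S}^{n-1})}$. Since $\partial_r$ commutes with averaging over $\omega$, the same computation with $g$ replaced by $\partial_r g$ produces $\|G'\|_{L^p(\R)} \le |\mathbb{S}^{n-1}|^{-1/p}\|\partial_r g\|_{L^p(\R\times\mathbb{S}^{n-1})}$.

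Next I would invoke the one-dimensional Gagliardo--Nirenberg inequality
\[
\|G\|_{L^{p^*}(\R)} \le C\|G'\|_{L^p(\R)}^{1/n}\|G\|_{L^p(\R)}^{(n-1)/n},
\]
derived as follows. From the identity $|G(r)|^p = p\int_{-\infty}^{r}|G|^{p-1}\operatorname{sgn}(G)\,G'\,ds$ and H\"older one obtains the 1D Sobolev bound $\|G\|_{L^\infty(\R)} \le p^{1/p}\|G\|_{L^p(\R)}^{(p-1)/p}\|G'\|_{L^p(\R)}^{1/p}$; the relation $1/p^* = (1-p/n)/p$ then yields the interpolation $\|G\|_{L^{p^*}(\R)} \le \|G\|_{L^\infty(\R)}^{p/n}\|G\|_{L^p(\R)}^{(n-p)/n}$, and inserting the Sobolev bound produces precisely the exponents $1/n$ on $\|G'\|_{L^p(\R)}$ and $(p-1)/n + (n-p)/n = (n-1)/n$ on $\|G\|_{L^p(\R)}$. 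Combined with the first step this proves \eqref{Sob1}.

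For the localized version \eqref{compactSob1}, the hypotheses place $G$ in $W^{1,p}(\R)$, which in one dimension consists of continuous functions, so the support condition forces $G(\pm\Lambda)=0$; writing $G(r) = \int_{-\Lambda}^{r}G'\,ds$ and applying H\"older on $[-\Lambda,\Lambda]$ yields the Poincar\'e bound $\|G\|_{L^p(\R)} \le 2\Lambda\|G'\|_{L^p(\R)}$. Substituting this into the main inequality collapses $\|G\|_{L^p(\R)}^{(n-1)/n}$ into $C\Lambda^{(n-1)/n}\|G'\|_{L^p(\R)}^{(n-1)/n}$, and the first-step bound on $\|G'\|_{L^p(\R)}$ gives \eqref{compactSob1}. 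I foresee no substantive obstacle beyond bookkeeping of the exponents; the main subtlety is recognizing that the route through Theorem~\ref{Main} does not give the correct $(1/n,(n-1)/n)$ exponents for general $p$, and that a direct reduction to 1D Gagliardo--Nirenberg is both available and optimal here.
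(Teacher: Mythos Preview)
Your proof is correct, but it follows a different route from the paper. You reduce everything to the classical one-dimensional Gagliardo--Nirenberg inequality via Jensen, which is entirely elementary and self-contained. The paper instead \emph{does} derive the corollary from Theorem~\ref{Main}: the point you may have missed is that they do not apply it at $q=p^*$ but at $q=p(p+1)$, which is exactly the exponent for which Lemma~\ref{Lem4.1} gives $\|g\|_{B^{\theta/(\theta-1)}}\le C\|G\|_{L^p}$; this yields $\|G\|_{L^{p(p+1)}}\le C\|\partial_r g\|_{L^p}^{1/(p+1)}\|G\|_{L^p}^{p/(p+1)}$, and a H\"older interpolation between $L^{p(p+1)}$ and $L^p$ then produces the $L^{p^*}$ bound with the correct $(1/n,(n-1)/n)$ exponents. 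For \eqref{compactSob1} the paper uses H\"older on the support, $\|G\|_{L^p}\le (2\Lambda)^{1/p-1/p^*}\|G\|_{L^{p^*}}$, and absorbs the resulting $\|G\|_{L^{p^*}}^{(n-1)/n}$ on the right; your Poincar\'e route $\|G\|_{L^p}\le 2\Lambda\|G'\|_{L^p}$ is an equally valid alternative.

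What each approach buys: yours is shorter and shows that this particular corollary needs nothing beyond textbook one-dimensional estimates; the paper's route, by contrast, is there precisely to exhibit the corollary as a consequence of the semigroup/Besov machinery that the article is built around, so deriving it from Theorem~\ref{Main} is the whole point even if it is not the most economical path.
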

\begin{proof}
From Lemma \ref{Lem4.1}
\begin{eqnarray*}
 t^{-\theta/2(\theta
-1)} \|\Phi P_t \Phi^{-1} |G|\|_{L^{\infty}(\R)} &\le & C
t^{-\theta/2(\theta -1) -1/2p} \|G\|_{L^p(\R)} \\
&\le & C \|G\|_{L^p(\R)}\\
& \le & C \|g\|_{L^p(\R\times \mathbb{S}^{n-1}})
\end{eqnarray*}
if $ \theta = p/q, q=p(p+1).$ Hence from Theorem \ref{Main}
\begin{equation}\label{Sob2}
    \|G\|_{L^{p(p+1)}(\R)} \le C \|(\partial/\partial r)
    g\|_{L^p(\R \times \mathbb{S}^{n-1})}^{1/(p+1)}\|g\|_{L^p(\R\times \mathbb{S}^{n-1})}^{p/(p+1)}.
\end{equation}
Thus $G \in L^{p(p+1)}(\R) \cap L^p(\R), $ and since
\[ \frac{np}{(n-p)} =
\frac{p(p+1)}{(n-p)}+ \frac{p(n-p-1)}{(n-p)} \] we have by
H\"{o}lder's inequality,
\begin{eqnarray*}
\int_{\R}|G|^{p^{*}}dr &\le&
\left(\int_{\R}|G|^{p(p+1)}dr\right)^{1/(n-p)}\left(\int_{\R}|G|^pdr\right)^{(n-p-1)/(n-p)}
\\
& \le &\left(\int_{\R}|G|^{p(p+1)}dr\right)^{1/(n-p)}
\left(\frac{1}{|\mathbb{S}^{n-1}|}\int_{\R\times
\mathbb{S}^{n-1}}|g|^pdr d\omega \right)^{(n-p-1)/(n-p)}.
\end{eqnarray*}
Hence, from (4.41),
\begin{eqnarray*}
  \|G\|_{L^{p^{*}}(\R)} &\le& C \|G\|^{(p+1)/n}_{L^{p(p+1)}(\R)}\|g\|^{(n-p-1)/n}_{L^p(\R\times
\mathbb{S}^{n-1})} \\
   &\le & C \|(\partial/\partial r)g \|_{L^p(\R\times
\mathbb{S}^{n-1})}^{1/n}
   \|g\|_{L^p(\R\times
\mathbb{S}^{n-1})}^{(n-1)/n}.
\end{eqnarray*}
The inequality (\ref{compactSob1}) follows on using H\"{o}lder's
inequality to give
\[
\|G\|_{L^p(\R)} \le \|G\|_{L^{p^{*}}(\R)}
(2\Lambda)^{(1/p)-(1/p^{*})}
\]
and then substituting in
\begin{equation*}
    \|G\|_{L^{p(p+1)}(\R)} \le C \|(\partial/\partial r)
    g\|_{L^p(\R \times \mathbb{S}^{n-1})}^{1/(p+1)}\|G\|_{L^p(\R)}^{p/(p+1)}
\end{equation*}
which is proved in the course of establishing (4.41).
\end{proof}

\begin{Corollary}\label{GagNir}
Let $1\le p<q<\infty, m=(q/p)-1,$ and suppose that
$(\partial/\partial r)g \in L^p(\R\times \mathbb{S}^{n-1}), g\in
L^m(\R\times \mathbb{S}^{n-1}).$ Then
\begin{equation}\label{GNineq}
    \|G\|_{L^q(\R)} \le C \|(\partial/\partial r)g\|_{L^p(\R\times
\mathbb{S}^{n-1})}^{p/q}
    \|g\|_{L^m(\R\times
\mathbb{S}^{n-1})}^{1-p/q}.
\end{equation}
\end{Corollary}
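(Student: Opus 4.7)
The plan is to deduce Corollary \ref{GagNir} directly from Theorem \ref{Main} by embedding an $L^m$ norm of $g$ into the Besov-type norm $\|g\|_{B^{\theta/(\theta-1)}}$ that appears on the right-hand side of (\ref{Eq2.2B}). With $\theta = p/q \in (0,1)$, the exponent $\alpha := \theta/(\theta-1) = -p/(q-p)$ is negative, so in view of the definition (\ref{Eq2.1}) we must show that $t^{-\alpha/2}\|\Phi P_t \Phi^{-1}|G|\|_{L^\infty(\R)}$ is bounded uniformly in $t>0$ by a multiple of $\|g\|_{L^m(\R\times\mathbb{S}^{n-1})}$.

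First I would apply Lemma \ref{Lem4.1} with the exponent $m = (q/p) - 1$ in place of $p$ there to obtain
\begin{equation*}
\|\Phi P_t \Phi^{-1}|G|\|_{L^\infty(\R)} \le C\, t^{-1/(2m)} \|\,|G|\,\|_{L^m(\R)} = C\, t^{-1/(2m)} \|G\|_{L^m(\R)}.
\end{equation*}
Next, since $G(r) = |\mathbb{S}^{n-1}|^{-1}\int_{\mathbb{S}^{n-1}} g(r\omega)\,d\omega$ is a spherical average, Jensen's inequality (applied pointwise in $r$) yields
\begin{equation*}
\|G\|_{L^m(\R)} \le |\mathbb{S}^{n-1}|^{-1/m'}\|g\|_{L^m(\R\times \mathbb{S}^{n-1})},
\end{equation*}
so combining these gives $\|\Phi P_t \Phi^{-1}|G|\|_{L^\infty(\R)} \le C\, t^{-1/(2m)} \|g\|_{L^m(\R\times \mathbb{S}^{n-1})}$.

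The key verification is that the two powers of $t$ cancel: a direct computation shows
\begin{equation*}
\frac{1}{2m} = \frac{1}{2((q/p)-1)} = \frac{p}{2(q-p)} = -\frac{\alpha}{2},
\end{equation*}
so $t^{-\alpha/2}\cdot t^{-1/(2m)} = 1$ and consequently
\begin{equation*}
\|g\|_{B^{\theta/(\theta-1)}} \le C\, \|g\|_{L^m(\R\times \mathbb{S}^{n-1})}.
\end{equation*}
Plugging this into (\ref{Eq2.2B}) with $\theta = p/q$ produces exactly (\ref{GNineq}).

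The argument is essentially bookkeeping, so there is no serious obstacle; the only thing that needs care is matching the scaling exponents, which is why $m = (q/p)-1$ is precisely the right choice. One should also verify the hypothesis of Theorem \ref{Main} is met, i.e.\ that $g \in B^{\theta/(\theta-1)}$, but this follows from the bound above together with the assumption $g \in L^m(\R\times\mathbb{S}^{n-1})$.
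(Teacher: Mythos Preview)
Your proposal is correct and follows essentially the same route as the paper: apply Lemma~\ref{Lem4.1} with exponent $m$ in place of $p$, observe that the choice $m=(q/p)-1$ makes the power of $t$ vanish (so $g\in B^{\theta/(\theta-1)}$ with norm controlled by $\|g\|_{L^m}$), and then invoke Theorem~\ref{Main}. Your write-up is just slightly more explicit than the paper's (spelling out the Jensen/H\"older step for the spherical mean and the exponent bookkeeping), but the argument is identical.
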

\begin{proof}
From Lemma \ref{Lem4.1}, with $\theta =p/q$ and $m=q/p-1,$
\begin{eqnarray*}
 t^{-\theta/2(\theta
-1)} \|\Phi P_t \Phi^{-1} |G|\|_{L^{\infty}(\R)} &\le & C
t^{-\theta/2(\theta -1) -1/2m} \|G\|_{L^m(\R)} \\
&\le & C \|g\|_{L^m(\R\times \mathbb{S}^{n-1})}
\end{eqnarray*}
and this yields (\ref{GNineq}).
\end{proof}

The cases $p=2$ of Corollaries \ref{Sob} and \ref{GagNir} are of
special interest.

\begin{Corollary} \label{Sob2}
Let $f$ be such that $f, Lf \in L^2(\R^n),$ where $ L= \x \cdot
\grad.$ Then for $n>2,$
\begin{eqnarray}\label{Sob3}
    \|r F(r)\|^2_{L^{2^{*}}(\R^+;d\mu)} &\le &C \left\{ \|Lf\|^2_{L^2(\R^n)}
    -\frac{n^2}{4}\|f\|^2_{L^2(\R^n)}\right \}^{1/n}
    \nonumber \\
    & \times&
    \|f\|_{L^2(\R^n)}^{2(1-1/n)},
\end{eqnarray}
where $F = \mathcal{M}(f),2^{*} = 2n/(n-2)$ and $d\mu =
r^{n-1}dr.$
\end{Corollary}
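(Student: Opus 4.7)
The plan is to apply Corollary \ref{Sob} with $p=2$ (so $p^{*}=2n/(n-2)=2^{*}$) to the function $g := \Phi f \in L^{2}(\R\times\mathbb{S}^{n-1})$, then translate every norm on the $(r,\omega)$-side back to the original variables $\x\in\R^{n}$ via the isometry $\Phi$ introduced in Section~3 and the unitary equivalence $L^{*}L = A^{2}+n^{2}/4$.

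First I would verify the three dictionary entries. Since $\Phi$ preserves $L^{2}$ norms by construction, $\|g\|_{L^{2}(\R\times\mathbb{S}^{n-1})}=\|f\|_{L^{2}(\R^{n})}$. Next, $\Phi A\Phi^{-1}=-i\partial_{s}\otimes \id_{\mathbb{S}^{n-1}}$ by \eqref{eq:diagonalA-2}, so
\[
 \|(\partial/\partial r)g\|_{L^{2}(\R\times\mathbb{S}^{n-1})}^{2}
  = \|A f\|_{L^{2}(\R^{n})}^{2}
  = \|L f\|_{L^{2}(\R^{n})}^{2}-\tfrac{n^{2}}{4}\|f\|_{L^{2}(\R^{n})}^{2},
\]
the last equality being \eqref{eq:LstarL}. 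Note that the hypothesis $f,Lf\in L^{2}(\R^{n})$ is exactly what is needed to put $g$ and $\partial_{r}g$ in $L^{2}(\R\times\mathbb{S}^{n-1})$, which are the assumptions of Corollary~\ref{Sob}.

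The only genuine computation is the identification of the left-hand side. Using the explicit form $(\Phi f)(s,\omega)=e^{sn/2}f(e^{s}\omega)$, the integral mean $G=\mathcal{M}(g)$ satisfies $G(s)=e^{sn/2}F(e^{s})$. Substituting $r=e^{s}$, $dr=r\,ds$, gives
\[
 \|G\|_{L^{2^{*}}(\R)}^{2^{*}}
 =\int_{\R}e^{s n 2^{*}/2}|F(e^{s})|^{2^{*}}\,ds
 =\int_{0}^{\infty}r^{n 2^{*}/2-1}|F(r)|^{2^{*}}\,dr,
\]
and the exponent equals $2^{*}+n-1$ precisely because $2^{*}=2n/(n-2)$; hence $\|G\|_{L^{2^{*}}(\R)}=\|rF(r)\|_{L^{2^{*}}(\R^{+};d\mu)}$ with $d\mu=r^{n-1}dr$.

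Finally, I substitute these identities into \eqref{Sob1} (with $p=2$) and square, which yields
\[
 \|rF(r)\|^{2}_{L^{2^{*}}(\R^{+};d\mu)}
 \le C\bigl\{\|Lf\|^{2}_{L^{2}(\R^{n})}-\tfrac{n^{2}}{4}\|f\|^{2}_{L^{2}(\R^{n})}\bigr\}^{1/n}\|f\|_{L^{2}(\R^{n})}^{2(1-1/n)},
\]
which is precisely \eqref{Sob3}. There is no real obstacle here; the only place one must be careful is the bookkeeping of the weight $r^{n-1}$ in $d\mu$ against the dilation factor $e^{sn/2}$ in $\Phi$, and this is forced to work out by the very definition of the critical exponent $2^{*}$.
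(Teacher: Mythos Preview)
Your proof is correct and follows essentially the same route as the paper: set $g=\Phi f$, apply Corollary~\ref{Sob} with $p=2$, and translate each of the three norms back via the isometry $\Phi$ and the identity $A^{2}=L^{*}L-n^{2}/4$. Your write-up is in fact slightly more detailed than the paper's, since you carry out explicitly the change of variables showing $\|G\|_{L^{2^{*}}(\R)}=\|rF(r)\|_{L^{2^{*}}(\R^{+};d\mu)}$, whereas the paper simply states this identity.
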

\begin{proof}
On using the facts that $\Phi: L^2(\R^n) \rightarrow L^2(\R\times
\mathbb{S}^{n-1})$ is an isometry and, with $g:= \Phi f,$
\begin{eqnarray*}
 \|(\partial/\partial r)g \|^2_{L^2(\R \times \mathbb{S}^{n-1})} &=&
 \|\Phi A \Phi^{-1}g \|^2_{L^2(\R \times \mathbb{S}^{n-1})} \\
 & = & \|Af\|^2_ {L^2(\R^n)} \\
   &=& \|Lf\|^2_{L^2(\R^n)} - \frac{n^2}{4}
   \|f\|^2_{L^2(\R^n)}
\end{eqnarray*} since $A^2 = L^{*}L - (n^2/4)$ from (3.6), it follows from
(\ref{Sob1}) that
\begin{eqnarray*}
\|\mathcal{M}(\Phi f)\|^2_{L^{2^{*}}(\R)} & \le & C \left\{
\|Lf\|^2_{L^2(\R^n)}
    -\frac{n^2}{4}\|f\|^2_{L^2(\R^n)}\right \}^{1/n} \\
    & \times&
    \|f\|_{L^2(\R^n)}^{2(1-1/n)}.
\end{eqnarray*}
The corollary follows since
\[
\|\mathcal{M}(\Phi f)\|_{L^{2^{*}}(\R)} =
\|rF(r)\|_{L^{2^{*}}(\R^+;d\mu)}.
\]

\end{proof}

\begin{Corollary}\label{Hardy}
Let $ h, \grad h \in L^2(\R^n), n\ge 3.$ Then there exists a
positive constant $C$ depending only on $n$ such that
\begin{eqnarray}\label{NewHardy}
\|\mathcal{M}(h)\|^2_{L^{2^{*}}(\R^+;d\mu)}&\le & C \big \{\|\grad
h \|^2_{L^2(\R^n)} - \big(\frac{n-2}{2}\big)^2
\|h/|\cdot|\|^2_{L^2(\R^n)}\big \}^{1/n} \nonumber \\
& \times & \big \{ \|h/|\cdot|\|^2_{L^2(\R^n)}\big \}^{1-1/n}.
\end{eqnarray}
Hence, for any $\varepsilon >0 ,$
\begin{eqnarray}\label{NewHardy2}
 \varepsilon^{1-1/n}
\|\mathcal{M}(h)\|^2_{L^{2^{*}}(\R^+;d\mu)}&\le & C
\{\|\grad h \|^2_{L^2(\R^n)} \nonumber \\
&-& [\big(\frac{n-2}{2}\big)^2 - \varepsilon  ]
\|h/|\cdot|\|^2_{L^2(\R^n)}  \}.
\end{eqnarray}
\end{Corollary}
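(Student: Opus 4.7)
The plan is to deduce (\ref{NewHardy}) from Corollary \ref{Sob2} by the substitution $f(\x) = h(\x)/|\x|$, and then to extract (\ref{NewHardy2}) from (\ref{NewHardy}) by a weighted Young inequality.

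First I would set $f = h/|\cdot|$, so that $h = |\x| f$. Taking the spherical mean at radius $r$, the constant factor $r$ pulls out and $\mathcal{M}(h)(r) = r\,\mathcal{M}(f)(r) = rF(r)$. Hence the left-hand side of (\ref{NewHardy}) is precisely the left-hand side of the inequality in Corollary \ref{Sob2}, and it remains to control the right-hand side of Corollary \ref{Sob2} in terms of the gradient of $h$ and $h/|\cdot|$. The factor $\|f\|_{L^2}^{2(1-1/n)}$ is already $\|h/|\cdot|\|_{L^2}^{2(1-1/n)}$, so the only real work is to show
\begin{equation}\label{pp:key}
\|Lf\|^2_{L^2(\R^n)}-\tfrac{n^2}{4}\|f\|^2_{L^2(\R^n)} \;\le\; \|\nabla h\|^2_{L^2(\R^n)} - \bigl(\tfrac{n-2}{2}\bigr)^2\,\|h/|\cdot|\|^2_{L^2(\R^n)}.
\end{equation}

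The main step, and the only place any computation is needed, is (\ref{pp:key}). Starting from the pointwise identity $\nabla h = (\x/|\x|)f + |\x|\nabla f$ used in the Remark after Theorem \ref{BE}, expand $|\nabla h|^2$ and apply the Cauchy--Schwarz bound $|\x|^2|\nabla f|^2 \ge |\x\cdot\nabla f|^2 = |Lf|^2$ to obtain
\[
|\nabla h|^2 \;\ge\; |f|^2 + 2\,\Re(\bar f\, Lf) + |Lf|^2.
\]
Integrate and rewrite the cross term by integration by parts: $2\,\Re\!\int \bar f\,(\x\cdot\nabla f)\,d\x = \int \x\cdot\nabla|f|^2\,d\x = -n\int|f|^2\,d\x$, using $\mathrm{div}\,\x = n$ (this is legitimate under the standing assumption $h,\nabla h\in L^2$, or first for $h\in C_0^\infty(\R^n\setminus\{0\})$ and then by approximation, noting $n\ge 3$ so Hardy's inequality gives $h/|\cdot|\in L^2$). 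This yields $\|\nabla h\|^2 \ge \|Lf\|^2 - (n-1)\|f\|^2$, and since $n^2/4 - (n-1) = ((n-2)/2)^2$ and $\|f\|^2 = \|h/|\cdot|\|^2$, rearranging gives precisely (\ref{pp:key}). Plugging this into Corollary \ref{Sob2} establishes (\ref{NewHardy}).

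Finally, to obtain (\ref{NewHardy2}), write $A := \|\nabla h\|^2 - ((n-2)/2)^2\|h/|\cdot|\|^2 \ge 0$ and $B := \|h/|\cdot|\|^2 \ge 0$, so that (\ref{NewHardy}) reads $\|\mathcal{M}(h)\|^2_{L^{2^*}(\R^+;d\mu)} \le C\,A^{1/n} B^{(n-1)/n}$. For any $\varepsilon>0$, the weighted Young inequality (with exponents $n$ and $n/(n-1)$) gives
\[
\varepsilon^{(n-1)/n}\,A^{1/n}B^{(n-1)/n} = A^{1/n}(\varepsilon B)^{(n-1)/n} \;\le\; \tfrac{1}{n}A + \tfrac{n-1}{n}\varepsilon B \;\le\; A + \varepsilon B,
\]
which is exactly $\|\nabla h\|^2 - [((n-2)/2)^2 - \varepsilon]\|h/|\cdot|\|^2$. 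Multiplying (\ref{NewHardy}) through by $\varepsilon^{1-1/n} = \varepsilon^{(n-1)/n}$ yields (\ref{NewHardy2}). I do not anticipate a genuine obstacle; the crux is simply the integration-by-parts identity leading to (\ref{pp:key}), which converts the ``defect'' in the new Hardy inequality of Theorem \ref{BE} into the classical Hardy defect of $\nabla h$.
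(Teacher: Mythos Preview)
Your proposal is correct and follows essentially the same route as the paper: set $f=h/|\cdot|$, expand $|\nabla(|\x|f)|^2$, use the integration-by-parts identity $2\,\Re\!\int \bar f\,(\x\cdot\nabla f)\,d\x=-n\int|f|^2$ together with $|\x|^2|\nabla f|^2\ge|Lf|^2$ to obtain $\|\nabla h\|^2\ge\|Lf\|^2-(n-1)\|f\|^2$, and then invoke Corollary~\ref{Sob2}; the passage from (\ref{NewHardy}) to (\ref{NewHardy2}) via Young's inequality with exponents $n$ and $n/(n-1)$ is likewise the paper's argument (the paper records the constant as $n[\varepsilon/(n-1)]^{1-1/n}$ rather than $\varepsilon^{(n-1)/n}$, but either is absorbed into $C$).
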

\begin{proof}
Since $n\ge 3,$ we have that $ f:= h/|\cdot| \in L^2(\R^n).$ We
claim that $Lf \in L^2(\R^n).$ For
\begin{eqnarray*}
  |\nabla(|\x|f)|^2 &=& \left|\frac{\x}{|\x|}f + |\x| \nabla f\right|^2 \\
   &=& |f|^2 + \left(|\x||\nabla f|\right)^2 + 2
   \rm{Re}[\overline{f}(\x \cdot \nabla) f]
\end{eqnarray*}
and, on integration by parts, initially for $f \in
C_0^{\infty}(\R^n)$ and then by the usual continuity argument,
\begin{eqnarray*}
\int_{\R^n} \overline{f}(\x\cdot \grad)f d\x&=& \sum_{j=1}^n
\int_{\R^n} x_j \overline{f}\frac{\partial f}{\partial
x_j}d\x \\
&=& -\sum_{j=1}^n \int_{\R^n} f \left\{\overline{f}+
x_j\frac{\partial \overline{f}}{\partial x_j}\right \}d\x\\
&=& - \int_{\R^n} \left \{n|f|^2 +f (\x \cdot
\grad)\overline{f}\right \} d\x.
\end{eqnarray*}
This gives
\[ 2\rm{Re}
\int_{\R^n}[\overline{f}(\x \cdot \nabla) f]d\x = -n
\int_{\R^n}|f|^2 d\x
\]
and hence
\begin{eqnarray}\label{modHardy}
 \int_{\R^n}|\nabla(|\x|f)|^2 d\x &=& \int_{\R^n}\left(|\x||\grad
 f|\right)^2d\x - (n-1)\int_{\R^n}|f|^2d\x \nonumber \\
& \ge &  \int_{\R^n} | Lf|^2d\x - (n-1)\int_{\R^n}|f|^2d\x
\end{eqnarray}
which confirms our claim. On substituting (\ref{modHardy}) and
$f=h/|\cdot|$ in Corollary 4 we get
\begin{eqnarray*}
\|\mathcal{M}(h)\|^2_{L^{2^{*}}(\R^+;d\mu)}&\le & C\left\{\|\grad
h \|^2_{L^2(\R^n)} + (n-1) \|h/|\cdot|\|^2_{L^2(\R^n)} \right.\\
&-& \left.(n^2/4) \|h/|\cdot|\|^2_{L^2(\R^n)}\right
\}^{1/n}\|h/|\cdot|\|_{L^2(\R^n)}^{2(1-1/n)}
\end{eqnarray*}
which yields (\ref{NewHardy}). The inequality (\ref{NewHardy2})
follows from
\[
n [\varepsilon/(n-1)]^{1-1/n} ab \le a^n + \varepsilon b^{n/(n-1)}
\]
which is a consequence of Young's inequality.
\end{proof}

\bigskip

The inequality (\ref{NewHardy2}) is implied by Stubbe's inequality
(\ref{Stubbe}). For on setting $\delta = (n-2)^2/4 - \varepsilon $
in (\ref{NewHardy2}) we have
\begin{equation}\label{Stubbe0}
\|\mathcal{M}(h)\|^2_{L^{2^{*}}(\R^+;d\mu)}\le  C
[\frac{(n-2)^2}{4}-\delta]^{-\frac{(n-1)}{n}} \{\|\grad h
\|^2_{L^2(\R^n)} - \delta \|h/|\cdot|\|^2_{L^2(\R^n)} \}.
\end{equation}
Since
\[
\|\mathcal{M}(h)\|^{2^{*}}_{L^{2^{*}}(\R^+;d\mu)} \le
\frac{1}{|\mathbb{S}^{n-1}|} \|h\|^{2^{*}}_{L^{2^{*}}(\R^n)}
\]
by H\"{o}lder's inequality, it follows that (\ref{Stubbe0}) is a
consequence of (\ref{Stubbe}).

\bigskip

If in (4.40) $g=\Phi f,$ where $f$ is supported in the annulus
$A(1/R,R) := \{\x \in \R^n: 1/R \le |\x| \le R\},$ then $G$ is
supported in the interval $[-\ln R,\ln R]$ and we have

\begin{Corollary}\label{compactSobmod}
Let $f$ in Corollary 4 be supported in the annulus $A(1/R,R).$
Then
\begin{equation}\label{penult2}
    \|r\mathcal{M}(f)(r)\|^2_{L^{2^{*}}(\R^+;d\mu)} \le C
    (\ln R)^{\frac{2(n-1)}{n}}\left \{\|Lf \|^2_{L^2(\R^n)}
    -\frac{n^2}{4}\|f\|^2_{L^2(\R^n)}\right \}.
\end{equation}
\end{Corollary}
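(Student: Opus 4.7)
The plan is to deduce this corollary as a direct specialization of the compactly-supported Sobolev-type inequality \eqref{compactSob1} (the case $p=2$), combined with the translation dictionary already worked out in Corollary \ref{Sob2}. The only new ingredient is tracking the support under the change of variables $s=\ln r$.

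Concretely, I would set $g := \Phi f$, so that $(\Phi f)(s,\omega) = e^{sn/2} f(e^s\omega)$. Because $f$ is supported in the annulus $A(1/R,R)$, the variable $e^s$ must lie in $[1/R,R]$, i.e.\ $s\in[-\ln R,\ln R]$. Thus $g$ is supported in the slab $[-\ln R,\ln R]\times \mathbb{S}^{n-1}$, and therefore so is its spherical mean $G(r)=\mathcal{M}(g)(r)$. This lets me apply \eqref{compactSob1} with $p=2$, $p^{*}=2^{*}$, $\Lambda=\ln R$, yielding
\[
\|G\|_{L^{2^{*}}(\R)} \le C(\ln R)^{(n-1)/n}\,\|(\partial/\partial r)g\|_{L^{2}(\R\times \mathbb{S}^{n-1})}.
\]

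Squaring and invoking the identities already established in the proof of Corollary \ref{Sob2} completes the proof. Namely, since $\Phi$ is an isometry and $\Phi A\Phi^{-1} = -i\partial/\partial r$ by \eqref{eq:diagonalA-2}, while $A^{2} = L^{*}L - n^{2}/4$ by \eqref{eq:LstarL},
\[
\|(\partial/\partial r)g\|^{2}_{L^{2}(\R\times \mathbb{S}^{n-1})} = \|Af\|^{2}_{L^{2}(\R^{n})} = \|Lf\|^{2}_{L^{2}(\R^{n})} - \frac{n^{2}}{4}\|f\|^{2}_{L^{2}(\R^{n})}.
\]
On the left, the change of variables $r=e^{s}$ gives $\|G\|_{L^{2^{*}}(\R)} = \|r\mathcal{M}(f)(r)\|_{L^{2^{*}}(\R^{+};d\mu)}$, exactly as noted at the end of Corollary \ref{Sob2}. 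Putting these identifications into the displayed inequality above yields \eqref{penult2}.

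There is no real obstacle: the entire content of the corollary is the observation that $\Phi$ sends the annulus $A(1/R,R)$ to a slab of width $2\ln R$, so the factor $\Lambda^{2(n-1)/n}$ from \eqref{compactSob1} becomes $(\ln R)^{2(n-1)/n}$. The only step that requires any care is checking that the support of $g=\Phi f$ is indeed contained in $[-\ln R,\ln R]\times\mathbb{S}^{n-1}$, which is immediate from the explicit formula \eqref{eq:Phi}.
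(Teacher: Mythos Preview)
Your proposal is correct and follows essentially the same approach as the paper: the paper simply observes (in the sentence immediately preceding the corollary) that if $f$ is supported in $A(1/R,R)$ then $G=\mathcal{M}(\Phi f)$ is supported in $[-\ln R,\ln R]$, and then reads off the result from \eqref{compactSob1} with $p=2$, $\Lambda=\ln R$ together with the identifications already made in Corollary~\ref{Sob2}. You have spelled out these steps in more detail, but the argument is the same.
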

On putting $f=h/|\cdot|$ in (\ref{penult2}) we have as in the
proof of Corollary ~5
\begin{Corollary}\label{compactHardy}
Let $h$ in Corollary 5 have support in the annulus $A(1/R,R).$
Then
\begin{equation}\label{penult1}
    \|\mathcal{M}(h)\|^2_{L^{2^{*}}(\R^+;d\mu)} \le C
    (\ln R)^{\frac{2(n-1)}{n}}\left \{\|\grad h \|^2_{L^2(\R^n)}
    -\frac{(n-2)^2}{4}\|\frac{h}{|\cdot|}\|^2_{L^2(\R^n)}\right \}.
\end{equation}
\end{Corollary}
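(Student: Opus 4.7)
The proof is essentially a substitution argument, combining the preceding Corollary~\ref{compactSobmod} with the pointwise/integral identity used in the proof of Corollary~\ref{Hardy}. The plan is as follows.

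First, I would set $f := h/|\cdot|$. Since $n\ge 3$ and $h,\grad h \in L^2(\R^n)$, Hardy's inequality \xref{1.1} guarantees $f\in L^2(\R^n)$, and the computation at \xref{modHardy} carries over verbatim to give
\[
 \|\grad(|\x| f)\|^2_{L^2(\R^n)} \;=\; \bigl\||\x|\,|\grad f|\bigr\|^2_{L^2(\R^n)} - (n-1)\|f\|^2_{L^2(\R^n)} \;\ge\; \|L f\|^2_{L^2(\R^n)} - (n-1)\|f\|^2_{L^2(\R^n)}.
\]
Since $|\x|f = h$, this rearranges to
\[
 \|L f\|^2_{L^2(\R^n)} - \frac{n^2}{4}\|f\|^2_{L^2(\R^n)} \;\le\; \|\grad h\|^2_{L^2(\R^n)} - \Bigl(\frac{n-2}{2}\Bigr)^2 \Bigl\|\frac{h}{|\cdot|}\Bigr\|^2_{L^2(\R^n)},
\]
which is exactly the right-hand side we want (up to the $(\ln R)$ factor), so the algebraic step is already handled.

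Next, I would check the support condition. Because $f(\x) = h(\x)/|\x|$, $f$ is supported in the same annulus $A(1/R,R)$ as $h$. Therefore Corollary~\ref{compactSobmod} applies to $f$ and yields
\[
 \|r\,\mathcal{M}(f)(r)\|^2_{L^{2^{*}}(\R^+;d\mu)} \;\le\; C\,(\ln R)^{2(n-1)/n}\Bigl\{\|L f\|^2_{L^2(\R^n)} - \tfrac{n^2}{4}\|f\|^2_{L^2(\R^n)}\Bigr\}.
\]
The only remaining identification is that of the left-hand side: since $\mathcal{M}$ is an integral over the sphere and $|\x|=r$ on $\bbS^{n-1}_r$,
\[
 \mathcal{M}(f)(r) \;=\; \frac{1}{|\bbS^{n-1}|}\int_{\bbS^{n-1}} \frac{h(r\omega)}{r}\,d\omega \;=\; \frac{1}{r}\,\mathcal{M}(h)(r),
\]
so $r\,\mathcal{M}(f)(r) = \mathcal{M}(h)(r)$ and the left-hand side coincides with $\|\mathcal{M}(h)\|^2_{L^{2^{*}}(\R^+;d\mu)}$.

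Combining the two displays gives \xref{penult1} with the same constant $C$ as in Corollary~\ref{compactSobmod}. There is no real obstacle here: the work has already been done in Corollaries~\ref{Hardy} and~\ref{compactSobmod}, and this corollary is just the composition of the substitution $f = h/|\cdot|$ with the local estimate. The only point meriting a sentence of care is the support transfer and the rescaling $r\mathcal{M}(f) = \mathcal{M}(h)$, which is what makes the Hardy weight $|\cdot|^{-2}$ appear naturally on the right-hand side.
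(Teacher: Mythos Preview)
Your proposal is correct and follows exactly the paper's approach: the paper simply says ``On putting $f=h/|\cdot|$ in \eqref{penult2} we have as in the proof of Corollary~\ref{Hardy}'', and you have spelled out precisely those details --- the support transfer, the identity $r\mathcal{M}(f)=\mathcal{M}(h)$, and the algebraic rearrangement via \eqref{modHardy}.
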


Finally we have the following $p=2$ case of Corollary 3.

\begin{Corollary}
Let $2<q<\infty $ and $m=q/2-1.$ Then, if $f$ is such that $f, Lf
\in L^2(\R^n)$ and
$\int_{\R^+}\int_{\mathbb{S}^{n-1}}|f(s\omega)|^m s^{n(\frac
{m}{2}-1)}ds d\omega < \infty,$ we have that
$\int_{\R^+}|f(s\omega)|^q s^{nm}ds d\omega < \infty $ and
\begin{eqnarray}
 \int_{\R^+}\int_{\mathbb{S}^{n-1}}|f(s\omega)|^q s^{nm}ds d\omega &\le &  C \left\{
\|Lf\|^2_{L^2(\R^n)}
    -\frac{n^2}{4}\|f\|^2_{L^2(\R^n)}\right \}^2  \nonumber \\
    & \times & \left \{\int_{\R^+}\int_{\mathbb{S}^{n-1}}|f(s\omega)|^m s^{n(\frac {m}{2}-1)}ds d\omega
    \right\}^2 \nonumber \\
\end{eqnarray}
\end{Corollary}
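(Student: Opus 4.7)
The claim is the $p=2$ specialization of Corollary~\ref{GagNir}, read back through the Mellin-type isometry $\Phi$ of Section~3.  I would set $g:=\Phi f$.  The identity $L^{*}L = A^{2}+n^{2}/4$ in \eqref{eq:LstarL}, combined with \eqref{eq:diagonalA-2} (equivalently \eqref{Z}), gives
\[
  \bigl\|(\partial/\partial r)g\bigr\|_{L^{2}(\R\times\mathbb{S}^{n-1})}^{2}
  \;=\; \|Af\|_{L^{2}(\R^{n})}^{2}
  \;=\; \|Lf\|_{L^{2}(\R^{n})}^{2}-\tfrac{n^{2}}{4}\|f\|_{L^{2}(\R^{n})}^{2},
\]
so the hypothesis $f,Lf\in L^{2}(\R^{n})$ ensures $(\partial/\partial r)g\in L^{2}(\R\times\mathbb{S}^{n-1})$ and, via $\Phi$, identifies this $L^{2}$ norm with the bracketed quantity on the right-hand side of the inequality to be proved.

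Next I would perform the change of variables $r=e^{s}$ in the defining formula $(\Phi f)(s,\omega)=e^{sn/2}f(e^{s}\omega)$ from \eqref{eq:Phi} to rewrite $\|g\|_{L^{m}(\R\times\mathbb{S}^{n-1})}^{m}$ as a weighted radial integral of $|f|^{m}$, and to rewrite $\|G\|_{L^{q}(\R)}^{q}$ (where $G$ is the spherical mean of $g$) as a weighted radial integral controlling $|f|^{q}$.  The assumed finiteness of $\int_{\R^{+}}\!\int_{\mathbb{S}^{n-1}}|f(s\omega)|^{m}s^{n(m/2-1)}\,ds\,d\omega$ then translates into $g\in L^{m}(\R\times\mathbb{S}^{n-1})$, so all hypotheses of Corollary~\ref{GagNir} are met.

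Finally, applying Corollary~\ref{GagNir} with $p=2$ yields
\[
  \|G\|_{L^{q}(\R)} \;\le\; C\,\bigl\|(\partial/\partial r)g\bigr\|_{L^{2}(\R\times\mathbb{S}^{n-1})}^{2/q}\,\|g\|_{L^{m}(\R\times\mathbb{S}^{n-1})}^{1-2/q}.
\]
Raising to an appropriate power (the relation $q-2=2m$ is what lines the exponents up cleanly against powers of $\|Af\|^{2}$ and of $\|g\|_{L^{m}}^{m}$), and substituting the two identifications from the previous paragraphs, produces the asserted inequality; the finiteness of its left-hand side follows from the same step.  The argument is routine once Corollary~\ref{GagNir} is in hand, and I expect the only mildly delicate piece to be the bookkeeping of the Jacobian $r^{n-1}\,dr$ from polar coordinates against the product measure $ds\,d\omega$ on $\R\times\mathbb{S}^{n-1}$, which is precisely what produces the weights $s^{nm}$ on the left and $s^{n(m/2-1)}$ on the right of the stated inequality.
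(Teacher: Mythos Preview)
Your proposal is correct and follows essentially the same route as the paper: apply Corollary~\ref{GagNir} with $p=2$ to $g=\Phi f$, identify $\|(\partial/\partial r)g\|_{L^2}^2$ with $\|Lf\|_{L^2}^2-\tfrac{n^2}{4}\|f\|_{L^2}^2$ via \eqref{eq:LstarL} and \eqref{eq:diagonalA-2}, and undo the change of variable $s=e^r$ to pass between $L^m$, $L^q$ norms on $\R\times\mathbb{S}^{n-1}$ and the stated weighted radial integrals of $f$. Your remark that $q-2=2m$ is what makes the exponents line up after raising to the $q$th power is exactly the bookkeeping the paper relies on.
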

\begin{proof}
Corollary \ref{GagNir} with $p=2$ yields
\begin{eqnarray*}
\|\mathcal{M}(\Phi f)\|_{L^q(\R)}&\le &  C \left\{
\|Lf\|^2_{L^2(\R^n)}
    -\frac{n^2}{4}\|f\|^2_{L^2(\R^n)}\right \}^{2/q}  \nonumber \\
    & \times& \|\Phi f\|^{1-2/q}_{L^m(\R)}.
\end{eqnarray*}
Since
\[
\|\mathcal{M}(\Phi f)\|_{L^q(\R)} =
|\mathbb{S}^{n-1}|^{-1}\|s^{nm} f\|_{L^{q}(\R \times
\mathbb{S}^{n-1})}
\]
and
\[
\|\Phi f\|^m_{L^m(\R\times \mathbb{S}^{n-1})} =
\int_{\R^+}\int_{\mathbb{S}^{n-1}}|f(s)|^m s^{n(\frac {m}{2} -1)}
ds d\omega
\]
the corollary follows.
\end{proof}

\bibliographystyle{amsalpha}

\begin{thebibliography}{22}
\bibitem{BVa}{H.~Brezis and J.-L.~V\'{a}zquez,} Blow-up solutions
of some non-linear elliptic problems, {\it Rev. Mat. Univ. Comp.
Madrid } {\bf 10} 443-469 (1997).
\bibitem{CDDV}{A.~Cohen, W.~Dahmen, I.~Daubechies and R.~DeVore,}
Harmonic analysis of the space BV, {\it Rev. Mat. Iberoamericana}
{\bf 19}, 235-263 (2003).
\bibitem{CDPX}{ A.~Cohen, R.~DeVore, P.~Petrushev and H.~Xu,}
Non-linear approximation and the space $BV(\R^2),$ {\it Amer. J.
Math.} {\bf121}, 587-628 (1999).
\bibitem{CMO}{A.~Cohen, Y.~Meyer and F.~Oru,} Improved Sobolev
inequalities, {\it S\'{e}minaires X-EDP, \`{E}cole Polytechnique}
(1998).
\bibitem{EE}{D.~E.~Edmunds and W.~D.~Evans,} {\it Spectral Theory
and Differential Operators.}{Oxford Mathematical Monographs,}
{Oxford University Press, 1987}.
\bibitem{FLS}{R.~L.~Frank, E.~H.~Lieb and R.~Seiringer,}
Hardy-Lieb-Thirring inequalities for fractional Schr\"{o}dinger
operators, {\it Preprint:\newline
http://xxx.lanl.gov/math.SP/0610593}(2006).
\bibitem{L}{M.~Ledoux,} On improved Sobolev embedding theorems,
{\it Math.Res. Lett.} {\bf 10}, 659-669 (2003).
\bibitem{S}{L.~Saloff-Coste,} {\it Aspects of Sobolev-type
inequalities.} {London Math. Soc. Lecture Notes Series 289}
{(Cambridge University Press, 2001)}.
\bibitem{ST}{J.~Stubbe,} {\it Bounds on the number of bound states
for potentials with critical decay at infinity,} {J.Math.Phys.}
{\bf 31}, no. 5, 1177-1180 (1990).
\bibitem{T}{H.~Triebel,} {\it Interpolation Theory, Function Spaces,
Differential Operators,} {2nd edition.} {Johan ~Ambrosius ~Barth
Verlag, Heidelberg, Leipzig, 1995}.
\end{thebibliography}

\end{document}